\DeclareMathOperator*{\esssup}{ess\,sup}
\DeclareMathOperator*{\essinf}{ess\,inf}
\DeclareMathOperator*{\esslim}{ess\,lim}
\newtheorem{theorem}{Theorem}[section]
\newtheorem{proposition}[theorem]{Proposition}
\newtheorem{lemma}[theorem]{Lemma}
\theoremstyle{remark}
\newtheorem{remark}[theorem]{Remark}
\theoremstyle{definition}
\newtheorem{definition}[theorem]{Definition}
\newcommand{\sgn}{\,{\rm sgn}}
\def\eps{\varepsilon}
\def\rr{\mathbb{R}}
\def\eps{\varepsilon}
\def\ml{\mathcal{L}}
\def\ou{\overline u}
\title[Heat equations with fast convection ]{Heat equations with fast convection: source-type solutions and large-time behaviour}
\author[J.~Endal]{J{\o}rgen Endal}
\address[J.~Endal]{Department of Mathematical Sciences\\
Norwegian University of Science and Technology (NTNU)\\
N-7491, Trondheim, Norway}
\email[]{jorgen.endal\@@{}ntnu.no}
\urladdr{https://folk.ntnu.no/jorgeen/}
\author[L.~I.~Ignat]{Liviu I. Ignat}
\address[L.~I.~Ignat]{Institute of Mathematics ``Simion Stoilow'' of the Romanian Academy\\
21 Calea Grivitei Street, 010702 Bucharest, Romania\\
and\\
The Research Institute of the University of Bucharest - ICUB\\
University of Bucharest\\
90-92 Sos. Panduri, 5th District, Bucharest, Romania
}
\email[]{liviu.ignat\@@{}gmail.com}
\urladdr{http://www.imar.ro/~lignat}
\author[F.~Quir\'{o}s]{Fernando Quir\'{o}s}
\address[F.~Quir\'{o}s]{Departamento de Matem\'aticas\\
Universidad Aut\'onoma de Madrid (UAM)\\
Campus de Cantoblanco, 28049 Madrid, Spain\\
and\\
Instituto de Ciencias Matem\'aticas ICMAT (CSIC-UAM-UCM-UC3M)\\
28049 Madrid, Spain}
\email[]{fernando.quiros\@@{}uam.es}
\urladdr{https://matematicas.uam.es/~fernando.quiros}
\begin{document}

\begin{abstract}
We study the existence and uniqueness of  source-type solutions to the Cauchy problem for the heat equation with fast convection under certain tail control assumptions.  We allow the solutions to change sign, but we will in fact show that they have the same sign as the initial data, which is a multiple of the Dirac delta. As an application, we obtain the large-time behaviour of nonnegative bounded solutions with integrable initial data to heat equations with fast convection, covering the case of several dimensions that remained open since the end of last century.
\end{abstract}

\keywords{Diffusion-convection, fast convection, source-type solutions, uniqueness, asymptotic behaviour}
\subjclass[2020]{%
35B40, % Asymptotic behavior of solutions
35A01, % Existence problems for PDEs: global existence, local existence, non-existence
35A02, % Uniqueness problems for PDEs: global uniqueness, local uniqueness, non-uniqueness
35A08, % Fundamental solutions to PDEs
%35A05, %General existence and uniqueness theorems
%35S05, %Pseudodifferential operators as generalizations of partial differential operators
60J60} %Diffusion processes
%35R09, %Integro-partial differential equations
%45K05, %Integro-partial differential equations
%46B50. %Compactness in Banach (or normed) spaces

\maketitle

\tableofcontents

\begin{center}
	\emph{Dedicated to professor Yihong Du on the occasion of his 60th birthday,  with thanks for his friendship and admiration for his mathematics.}
\end{center}

%%%%%%%%%%%%%%%%%%%%%%%%%%%%%%%%%%%%%%%%%%%%%%%%%%%%%%%%%%
%%%%%%%%%%%%%%%%%%%%%%%%%%%%%%%%%%%%%%%%%%%%%%%%%%%%%%%%%%
\section{Introduction and main results}
\label{sect-Introduction} \setcounter{equation}{0}

\subsection{Source-type solutions}
The first goal of this paper is to study the existence and uniqueness of solutions to the diffusion problem with convection
\begin{equation}
    \label{eq:source.type}
    \partial_t u+\mathcal{L} u + \partial_{x_N} (|u|^{q-1}u)=0
    \quad \text{in }Q:=(0,\infty)\times \rr^N,\qquad u(0)=M\delta_0,\ M\neq0 \quad \text{on }\rr^N,
\end{equation}
in the \emph{fast convection} range $q\in (1-1/N,1)$, both for $\mathcal{L}=-\Delta$, and for $\mathcal{L}=-\Delta_{x'}$, where $\Delta_{x'}$ stands for the Laplacian in the first $N-1$ coordinates. Here  $\delta_0$ denotes the Dirac mass located at the origin. Solutions to this problem, with a multiple of a Dirac delta as initial datum,  are known as \emph{source-type} solutions, and also as~\emph{fundamental} solutions.
  We are interested in solutions that conserve mass along the evolution,
$$
\int_{\mathbb{R}^N} u(\cdot,t) =M\quad\text{for all }t>0.
$$
That is where the restriction for $q$ from below, $q>1-1/N$, comes from.

We start by considering the case of solutions that have a definite sign. We assume, without loss of generality, that they are nonnegative, and  prove uniqueness in the framework of entropy solutions; see Section~\ref{sect-Entropy} for the definition and main properties of solutions of this kind. This is the content of our first theorem.

\begin{theorem}
\label{thm:uniqueness.non-negative}
Let $q\in (1-1/N,1)$ and  $\mathcal{L}=-\Delta$ or $\mathcal{L}=-\Delta_{x'}$. There exists at most one nonnegative entropy solution  $u$ of  problem~\eqref{eq:source.type}.
\end{theorem}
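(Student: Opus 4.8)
The plan is to prove uniqueness via the classical doubling-of-variables technique of Kruzhkov, adapted to the entropy framework introduced in Section~\ref{sect-Entropy}, combined with careful handling of the singular Dirac initial datum. The core of any entropy-solution uniqueness argument is an $L^1$-contraction estimate, so the overarching strategy is: first establish such a contraction (or comparison) principle for entropy solutions of the equation in \eqref{eq:source.type}, and then show that the only obstruction — matching the singular initial data — forces two entropy solutions with the same initial mass $M\delta_0$ to coincide.

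The key steps, in order, are as follows. First I would take two nonnegative entropy solutions $u$ and $v$ of \eqref{eq:source.type} and apply Kruzhkov's doubling of variables in the spatial and temporal variables, using the entropy inequalities satisfied by each solution against the Kruzhkov entropy-flux pairs $(|u-k|,\sgn(u-k)(f(u)-f(k)))$ where $f(s)=|s|^{q-1}s$ acts only in the $x_N$-direction. The diffusion operator $\mathcal{L}$, being linear and (degenerate-)parabolic, contributes a term with a favourable sign after integration by parts against the symmetric test functions, so the standard computation should yield, for suitable nonnegative test functions $\varphi$,
\begin{equation}
\label{eq:contraction-sketch}
\int_0^\infty\!\!\int_{\rr^N} \Big(|u-v|\,\partial_t\varphi + \sgn(u-v)\big(f(u)-f(v)\big)\partial_{x_N}\varphi + |u-v|\,\mathcal{L}^*\varphi\Big) \geq 0,
\end{equation}
where $\mathcal{L}^*=\mathcal{L}$ by self-adjointness. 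Next, by an appropriate choice of cut-offs in $\varphi$ that localize in time near $t=0$ and exploit the tail control assumptions built into the entropy-solution definition, I would pass to the limit to obtain the differential $L^1$-contraction $\frac{d}{dt}\int_{\rr^N}|u-v|(\cdot,t)\,dx \le 0$ in the sense of distributions on $(0,\infty)$, so that $\|u(\cdot,t)-v(\cdot,t)\|_{L^1(\rr^N)}$ is nonincreasing. Finally, the crucial step is to show this quantity tends to $0$ as $t\downarrow 0$: since both solutions attain the same initial datum $M\delta_0$ in the sense of measures (and both are nonnegative with conserved mass $M$), I would argue that their common singular behaviour at $t=0$, together with the $L^1$ tail control, forces $\lim_{t\downarrow 0}\|u(\cdot,t)-v(\cdot,t)\|_{L^1}=0$; combined with monotonicity this gives $u\equiv v$.

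The main obstacle I anticipate is precisely this last step — handling the initial trace. Because the datum is a singular measure rather than an $L^1$ function, one cannot simply test near $t=0$ and quote continuity in $L^1$ down to $t=0$; the solutions genuinely blow up in $L^\infty$ as $t\downarrow 0$. The contraction estimate \eqref{eq:contraction-sketch} degenerates near the initial time, and the difference $u-v$ need not be small in $L^1$ a priori just because both converge weakly-$*$ to the same measure. The delicate point is to upgrade weak-$*$ convergence to $M\delta_0$ into an $L^1$-closeness statement for the difference, which typically requires either a self-similar rescaling near $t=0$ to normalize the singularity, or the tail-control assumptions flagged in the abstract to pin down the mass distribution uniformly. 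I would expect the rigorous justification of the vanishing initial $L^1$-difference — possibly via a scaling argument that reduces matters to the behaviour of entropy solutions on a fixed time interval after rescaling, where standard $L^1$-stability applies — to be where the real work and the novel tail hypotheses enter.
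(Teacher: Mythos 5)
Your overall strategy (Kruzhkov doubling to get an $L^1$-contraction, then drive the $L^1$-difference to zero at the initial time) is not the route the paper takes, and it has a genuine gap exactly at the step you yourself flag as ``the real work''. The contraction part is essentially available: for data in $L^1\cap L^\infty$ it is Lemma~\ref{lem.UniquenessPropertiesEntropy}, built on results of Panov and Andreianov--Brassart — though even there you should note that $f(s)=|s|^{q-1}s$ is \emph{not} locally Lipschitz when $q<1$ (the paper stresses this is precisely why the arguments known for $q>1$ break down), so the ``standard computation'' requires either a regularized flux, such as $f_\eta(s)=(s^2+\eta)^{q/2}-\eta^{q/2}$ used in the paper, or results valid for merely continuous fluxes. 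The fatal problem is the final step. Weak-$*$ convergence of $u(t)$ and $v(t)$ to the \emph{same} measure $M\delta_0$ gives no control whatsoever on $\|u(t)-v(t)\|_{L^1(\rr^N)}$: both functions carry mass $M$ concentrating at the origin, and their difference can a priori have $L^1$-norm of order $2M$ for every small $t$ (think of two different approximate identities: their difference tends to zero weakly-$*$ but certainly not in $L^1$). Monotonicity of $t\mapsto\|u(t)-v(t)\|_{L^1(\rr^N)}$ only transports smallness forward from $t=0^+$; it cannot create it. Neither the tail control nor conservation of mass excludes the two solutions distributing the same mass near the origin in mutually singular ways on small scales. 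This is the classical core difficulty in uniqueness of source-type solutions — it is why Liu--Pierre, and later Escobedo--V\'azquez--Zuazua, needed genuinely nontrivial arguments even for smooth fluxes — and your sketch offers no mechanism to overcome it.

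The paper circumvents the initial layer in a completely different way, and the mechanism is worth absorbing. First, integrating the equation in $x_N$ shows that the marginal $\int_\rr u(t,x',x_N)\,{\rm d}x_N$ solves the heat equation in $\rr^{N-1}$ with datum $M\delta_{0'}$, hence equals $M\Gamma_{N-1}(t,x')$ for \emph{both} solutions and for \emph{all} $t>0$ — a rigidity statement, not an asymptotic one. Then, instead of comparing $u$ and $\ou$ in $L^1$, one compares their primitives $\int_{-\infty}^{x_N}u\,{\rm d}y_N$, which are bounded (by the common marginal) and, after regularizing the flux and the data, are classical solutions of the Hamilton--Jacobi equation $\partial_t v-\Delta v=f_\eta(\partial_{x_N}v)$. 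Because the two solutions share the same marginal, the initial traces of these primitives can be compared up to translations by $\pm2r$ in the $x_N$-direction; this shift trick is what replaces the unavailable ``$L^1$-closeness at $t=0$''. A comparison principle for bounded solutions of the Hamilton--Jacobi equation (proved by an energy/Gr\"onwall argument with cut-offs) propagates the shifted inequalities to all positive times, and letting $\eta\to0$, $n\to\infty$ and finally $r\to0$ yields $u=\ou$. In short: the singular initial datum is handled at the level of cumulative distribution functions in $x_N$, where weak-$*$ information becomes pointwise information, rather than at the level of $L^1$ differences, where it is useless.
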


\begin{remark}
When $\mathcal{L}=-\Delta_{x'}$, the invariance of problem~\eqref{eq:source.type} under certain scalings and the uniqueness result imply that    source-type solutions have a self-similar form,
 $$
u(t,x',x_N)=t^{-\alpha}f(x't^{-1/2},x_Nt^{-\beta}),\qquad \alpha=\frac {N+1}{2q},\quad\beta=\frac{N+1-q(N-1)}{2q};
$$
see~\cite{EVZIndiana} for a similar argument when $q>1$. If $q\ge (N+1)/(N-1)$, $\beta\le0$, which is incompatible with the initial datum being a multiple of a Dirac mass.
\end{remark}

 When $q>1$ the above analysis was performed first for the rather special case $N=1$ and $\mathcal{L}=-\Delta_{x'}$,  in which problem~\eqref{eq:source.type} reduces to the scalar conservation law
\begin{equation}
\label{eq:scalar.conservation.law}
\partial_t u+\partial_{x} (|u|^{q-1}u)=0,
\end{equation}
in~\cite{MR735207},
later for $N=1$ and $\mathcal{L}=-\Delta$ in~\cite{EVZArma}, and finally for both operators in any dimension in~\cite{EVZIndiana}.

In the fast convection range, up to now only the special case $N=1$, $\mathcal{L}=-\Delta_{x'}$ has been covered; see~\cite[Proposition 1.1]{LaurencotFast}. The proofs when $q>1$  use that the convection nonlinearity is $C^1$, something that is not true in the fast convection range; hence the need of a new approach.  Our proof is inspired by our recent paper~\cite{JFL}, where  we consider a nonlocal analogue of~\eqref{eq:source.type} in which the $\mathcal{L}$ is a stable nonlocal diffusion operator.

We will next allow for sign changes. However, we will prove  that under certain tail-control assumption, source-type solutions have the same sign as the constant $M$ in the initial datum. Hence, by the previous step, we have uniqueness in this class.

\begin{theorem}
\label{thm:uniqueness.no.sign}
Let $q\in (1-1/N,1)$,   $\mathcal{L}=-\Delta$ or $\mathcal{L}=-\Delta_{x'}$, and $M>0$. Then, any entropy solution $u\in L^\infty((0,\infty);L^1(\rr^N))$ of  problem~\eqref{eq:source.type} such that
\begin{equation}
    \label{eq:tail.extra.hyp}\int_{|x|>r}|u(t,x',x_N)|\,{\rm d}x'{\rm d}x_N\rightarrow 0 \text{ as }t\rightarrow 0^+\text{ for any }r>0,
\end{equation}
is nonnegative. As a corollary, there is at most one  entropy solution to~\eqref{eq:source.type} satisfying~\eqref{eq:tail.extra.hyp}.
\end{theorem}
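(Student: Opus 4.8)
The plan is to prove the nonnegativity assertion; the corollary is then immediate, since Theorem~\ref{thm:uniqueness.non-negative} guarantees that two nonnegative entropy solutions of~\eqref{eq:source.type} coincide. To show $u\ge0$ I would prove that the negative part $u^-:=\max(-u,0)$ satisfies $\int_{\rr^N}u^-(t)\,dx=0$ for every $t>0$, and I would obtain this from two ingredients: that $t\mapsto\int_{\rr^N}u^-(t)\,dx$ is non-increasing on $(0,\infty)$, and that it tends to $0$ as $t\to0^+$. Granting both, for any $t>0$ and $0<\tau<t$ one has $\int u^-(t)\le\int u^-(\tau)$, and letting $\tau\to0^+$ forces $\int u^-(t)\le0$; since $u^-\ge0$ this yields $u^-\equiv0$.

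The monotonicity is a Kato-type estimate obtained by comparing $u$ with the entropy solution $v\equiv0$: the doubling-of-variables argument, equivalently the entropy inequality for the convex entropy $\eta(s)=s^-$ (whose flux is $Q(s)=(s^-)^q$), gives
$$\partial_t u^-+\mathcal{L}u^-+\partial_{x_N}\big((u^-)^q\big)\le0\quad\text{in }\mathcal{D}'\big((0,\infty)\times\rr^N\big),$$
and integrating in space (the diffusion and convection terms integrating to zero) yields $\frac{d}{dt}\int u^-\le0$. For the vanishing as $t\to0^+$ the initial datum enters. I would first establish the \emph{localized} statement $\int_{\rr^N}u^-(t)\,\psi\,dx\to0$ for every $0\le\psi\in C_c^\infty(\rr^N)$, using the entropy inequality for $\eta(s)=s^-$ up to the initial time (in the sense of the framework of Section~\ref{sect-Entropy}), tested against $\varphi(t,x)=(1-t/T)_+\psi(x)$. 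Since $M>0$, the datum $M\delta_0$ has zero negative part, so the boundary contribution is $(M\delta_0)^-=0$ and one is left with
$$\frac1T\int_0^T\!\!\int_{\rr^N}u^-\,\psi\,dx\,dt\;\le\;\int_0^T\!\!\int_{\rr^N}\Big(u^-\,|\Delta\psi|+(u^-)^q\,|\partial_{x_N}\psi|\Big)\,dx\,dt\;\le\;C\,T.$$
Here the first factor is controlled by $\|\Delta\psi\|_\infty\|u(t)\|_{L^1}$, and the sublinear flux by Hölder's inequality on the compact support $K$ of $\psi$, namely $\int_K(u^-)^q\le|K|^{1-q}\|u(t)\|_{L^1}^q$, both uniformly in $t$ by the $L^\infty((0,\infty);L^1)$ bound; this is precisely the place where $q<1$ is used. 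Letting $T\to0^+$ and noting that the interior inequality makes $t\mapsto\int u^-\psi$ of bounded variation near $0$ (hence possessing a one-sided limit, which is $\ge0$ and $\le0$), the localized limit follows.

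The tail hypothesis~\eqref{eq:tail.extra.hyp} then promotes this to the global statement: choosing $0\le\psi\le1$ with $\psi\equiv1$ on $B_r$ and $\mathrm{supp}\,\psi\subset B_{2r}$,
$$\int_{\rr^N}u^-(t)\,dx\;\le\;\int_{\rr^N}u^-(t)\,\psi\,dx+\int_{|x|>r}|u(t,x)|\,dx,$$
and both terms vanish as $t\to0^+$, the first by the localized limit and the second by~\eqref{eq:tail.extra.hyp}. This supplies the second ingredient, proving $u\ge0$, and the corollary follows from Theorem~\ref{thm:uniqueness.non-negative}.

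The step I expect to be the main obstacle is making the entropy inequality ``up to $t=0$'' rigorous for the singular measure datum: $u(t)$ attains $M\delta_0$ only weakly-$*$, while $s\mapsto s^-$ is not weak-$*$ continuous, so a priori positive and negative mass could both concentrate at the origin. Justifying that the boundary term is exactly $(M\delta_0)^-=0$ rather than a positive defect is what the convexity of the entropy, encoded in the inequality, must supply, and it is the genuinely PDE-dependent part of the argument rather than soft measure theory; without the tail control~\eqref{eq:tail.extra.hyp} the conclusion could still fail through negative mass escaping to spatial infinity, which is why that hypothesis is essential in the globalization step.
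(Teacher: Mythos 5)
Your outer skeleton is partly sound: the monotonicity of $t\mapsto\int_{\rr^N}u^-(t)$ does follow (since $f$ is odd, $-u$ is an entropy solution, so $u^-=\max\{-u,0\}$ is an entropy subsolution by Proposition~\ref{results.subsolutions}(c), Proposition~\ref{results.subsolutions}(b) gives the distributional inequality, and your cutoff computation closes because $N(1-q)<1$), and the globalization via the tail hypothesis~\eqref{eq:tail.extra.hyp} is fine. But the middle ingredient --- that the localized negative mass vanishes as $t\to0^+$ --- is a genuine gap, and it is exactly the one you flagged without closing. There is no ``entropy inequality up to $t=0$'' with boundary term $(M\delta_0)^-=0$ available in this framework: inequality~\eqref{eq:sub.entropy.inequality.full} is formulated for initial data in $L^1(\rr^N)\cap L^\infty(\rr^N)$ attained in $L^1_{\rm loc}$, whereas for problem~\eqref{eq:source.type} the datum enters only through the weak-$*$ condition~\eqref{eq:initial.data}. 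Weak-$*$ convergence $u(t)\rightharpoonup M\delta_0$ is perfectly compatible with $u^+(t)\rightharpoonup (M+c)\delta_0$ and $u^-(t)\rightharpoonup c\delta_0$ for some $c>0$: positive and negative mass concentrate at the origin and cancel. If you try to derive your tested inequality rigorously by inserting a time cutoff $\zeta_\eps(t)$ vanishing near $t=0$, the boundary term you produce is precisely the unknown initial trace $\lim_{t\to0^+}\int_{\rr^N}u^-(t)\psi$, not $(M\delta_0)^-$; the resulting inequality only says that this trace dominates later values of $\int u^-\psi$, which is monotonicity again, so the argument is circular. Convexity of the entropy alone cannot exclude the defect $c>0$; excluding it is the actual content of the theorem.

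The paper rules out this cancellation scenario with much heavier machinery, which shows what is really needed. It proves $u^+$ and $u^-$ are entropy subsolutions; along a sequence $t_j\to0^+$, $u^\pm(t_j)$ converge weakly-$*$ to nonnegative measures $\mu,\nu$ with $\mu-\nu=M\delta_0$, and the tail control forces both to be Dirac masses at the origin. It then dominates $u^+(\cdot+t_j)$ and $u^-(\cdot+t_j)$ by the entropy \emph{solutions} $h_j$, $g_j$ with data $u^+(t_j)$ and $u^-(t_j)+m_j$ (where $m_j\to M\delta_0$), using Proposition~\ref{results.subsolutions}(d); passes to the limit $j\to\infty$ via the hyperbolic $L^1$--$L^p$ decay, energy and time-derivative estimates (plus a shift estimate in the $x_N$ direction when $\ml=-\Delta_{x'}$) to obtain nonnegative entropy solutions $h$, $g$ with Dirac data $\mu$ and $\nu+M\delta_0=\mu$; and invokes Theorem~\ref{thm:uniqueness.non-negative} to conclude $h=g$. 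Since $u^+(t)$ and $u^-(t)$ have disjoint supports, $|u(t)|=\max\{u^+(t),u^-(t)\}\le h(t)$ a.e., and comparing masses ($\int_{\rr^N}|u(t)|=M+2\int_{\rr^N}u^-(t)$ against $\int_{\rr^N}h(t)=M+\nu(\rr^N)$, then letting $t=t_j\to0^+$) yields $\nu=0$, hence $u^-\equiv0$. So the uniqueness theorem for nonnegative source-type solutions is not merely the ``corollary step'' of your plan: it is the engine that pins down the initial trace of $u^-$, which your proposal leaves undetermined.
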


For $q\in(1,(N+1)/(N-1))$, $q\le 2$,  a similar analysis
was performed  in~\cite{Carpio1996}.
No such results were available up to now for the fast convection range.

\begin{remark} (a) The uniqueness results in~\cite[Theorem 1.1]{MR735207} for the scalar conservation law~\eqref{eq:scalar.conservation.law}, corresponding to the special case $N=1$,  $\mathcal{L}=-\Delta_{x'}$, which are valid  for any $q>1$, do not assume a sign for the solution. Besides, they are valid for solutions having a finite measure with a sign (which need not be a multiple of a Dirac mass) as initial datum.

\noindent (b) As mentioned above, there are no nonnegative solutions to~\eqref{eq:source.type} when $\mathcal{L}=-\Delta_{x'}$  if $q\ge(N+1)/(N-1)$.  However, this nonexistence result was based on the uniqueness of source-type solutions. Since the question of uniqueness of source-type solutions that may change sign is still open in this range of exponents (even under the tail-control assumption), we may not yet exclude the possibility of their existence.

\noindent (c) In dimension $N=1$ our proof of Theorem~\ref{thm:uniqueness.no.sign} is valid for $q\in(1,3)$ when $\mathcal{L}=-\Delta$.
Hence we are able to improve the available results, covering the range $q\in(2,3)$; see Remark~\ref{rk:q.2.3}.
\end{remark}

\begin{theorem}
\label{thm:existence}
Let $q\in (1-1/N,1)$ and  $\mathcal{L}=-\Delta$ or $\mathcal{L}=-\Delta_{x'}$. There exists a  nonnegative entropy solution  $u$ of  problem~\eqref{eq:source.type} satisfying the tail-control condition~\eqref{eq:tail.extra.hyp}.
\end{theorem}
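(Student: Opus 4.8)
The natural strategy is to build the source-type solution as a limit of solutions issuing from regular data that concentrate to $M\delta_0$. First I would fix a nonnegative mollifier $\phi$ with $\int_{\rr^N}\phi=1$ and set $u_{0,n}(x)=Mn^N\phi(nx)$, so that $u_{0,n}\ge0$, $\|u_{0,n}\|_{L^1(\rr^N)}=M$, and $u_{0,n}\rightharpoonup M\delta_0$ in the sense of measures. For each such datum in $L^1(\rr^N)\cap L^\infty(\rr^N)$ the well-posedness theory of Section~\ref{sect-Entropy} provides a unique entropy solution $u_n$, which is nonnegative by the comparison principle since $u_{0,n}\ge0$. The whole proof then reduces to passing to the limit $n\to\infty$, and essentially all the difficulty is concentrated in producing a priori bounds on $u_n$ that are uniform in $n$ even though $\|u_{0,n}\|_{L^p(\rr^N)}\to\infty$ for every $p>1$.

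The estimates I would aim for are: (i) conservation of mass, $\|u_n(t)\|_{L^1(\rr^N)}=M$ for all $t>0$, which is exactly where $q>1-1/N$ is used, since it forces the flux to decay fast enough for the boundary term to vanish; (ii) a uniform smoothing estimate, $\|u_n(t)\|_{L^\infty(\rr^N)}\le C t^{-\alpha}$ with $C=C(N,q,M)$ independent of $n$; (iii) a uniform tail bound $\int_{|x|>r}u_n(t,x)\,{\rm d}x\le\omega(t)$ with $\omega(t)\to0$ as $t\to0^+$, controlling the spreading of mass; and (iv) enough space--time equicontinuity, e.g.\ $L^1$-continuity in time, to extract a convergent subsequence. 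For the smoothing estimate I would test the equation against $|u_n|^{p-2}u_n$: the convection contribution is the integral of a pure $x_N$-derivative of $\tfrac{q}{p+q-1}|u_n|^{p+q-1}$ and therefore vanishes, so all $L^p$ norms are non-increasing; when $\mathcal{L}=-\Delta$ a Nash-type iteration then yields the heat-kernel rate $\|u_n(t)\|_{L^\infty(\rr^N)}\le CMt^{-N/2}$, uniformly in $n$.

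With these bounds in hand I would extract a subsequence converging in $C_{\rm loc}((0,\infty);L^1(\rr^N))$ and a.e.\ in $Q$ to some nonnegative $u\in L^\infty((0,\infty);L^1(\rr^N))$. The uniform $L^\infty$ decay makes $u$ locally bounded away from $t=0$ and lets one pass to the limit in the nonlinear flux $|u_n|^{q-1}u_n$ by dominated convergence, while the Kruzhkov entropy inequalities pass to the limit by lower semicontinuity, so that $u$ is an entropy solution of the equation in $Q$. To identify the initial datum I would control $\frac{{\rm d}}{{\rm d}t}\int_{\rr^N} u_n(t)\psi$ for $\psi\in C_c^\infty(\rr^N)$ uniformly in $n$, bounding the diffusion term by the mass and the convection term by $\|\partial_{x_N}\psi\|_\infty\int_{\rr^N}|u_n|^q$, whose time integral near $t=0$ is finite precisely because $q>1-1/N$. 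This shows $u(t)\rightharpoonup M\delta_0$ as $t\to0^+$, and the uniform tail bound (iii) upgrades this weak convergence to the tail-control condition~\eqref{eq:tail.extra.hyp}.

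The main obstacle is the smoothing estimate (ii) in the degenerate case $\mathcal{L}=-\Delta_{x'}$, together with the matching spatial localization. There the diffusion regularizes only the first $N-1$ variables, so all decay and compactness in the $x_N$ direction must be extracted from the convection alone; since the flux $|u|^{q-1}u$ fails to be $C^1$ in the fast range $q<1$, the classical arguments valid for $q>1$ (as in~\cite{EVZIndiana,Carpio1996}) do not apply, and one is forced to an anisotropic, scaling-adapted argument of the type developed for the nonlocal model in~\cite{JFL}, giving the anisotropic rate $\|u_n(t)\|_{L^\infty(\rr^N)}\le Ct^{-\alpha}$ with $\alpha=(N+1)/(2q)$. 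Once this uniform bound and the companion $x_N$-tail control are established, the compactness and limit-passage steps outlined above are routine.
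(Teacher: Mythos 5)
Your proposal is correct in outline and has essentially the same skeleton as the paper's argument: approximate $M\delta_0$ by nonnegative mollifiers of mass $M$, solve, derive bounds that are uniform in $n$ because they depend only on the $L^1$ norm of the data, extract a limit by compactness, pass to the limit in the entropy formulation, and recover the initial datum together with the tail control~\eqref{eq:tail.extra.hyp} from a uniform tail estimate. The paper's proof of Theorem~\ref{thm:existence} is in fact a one-liner: it applies to mollified data exactly the machinery assembled in Step~(iv) of the proof of Theorem~\ref{thm:uniqueness.no.sign}, namely the hyperbolic $L^1$--$L^p$ decay~\eqref{eq.HEDecayOfLpNorm}, the energy and time-derivative estimates~\eqref{eq.HEEnergyEstimate}--\eqref{eq.HEEstimateTimeDerivative}, Aubin--Lions--Simon compactness~\cite{Simon} for $\mathcal{L}=-\Delta$ (or the shift estimate~\eqref{translation.N} plus the compactness lemma of~\cite{JFL} for $\mathcal{L}=-\Delta_{x'}$), and the tail control~\eqref{eq.HETailControl}. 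The one point where you genuinely deviate is the smoothing mechanism for $\mathcal{L}=-\Delta$: you propose $L^p$-monotonicity plus Nash iteration, giving the parabolic rate $t^{-N/2}$, whereas the paper uses the hyperbolic (Ole\u{\i}nik-type) rate $t^{-\frac{N+1}{2q}(1-\frac1p)}$ of~\cite{JFL} for both operators at once. Your route is more elementary and is explicitly acknowledged as viable in Remark~\ref{rk:q.2.3}; its limitation, which you correctly identify, is that it gives nothing for $-\Delta_{x'}$, where you (like the paper) must fall back on the anisotropic, scaling-adapted estimates of~\cite{JFL}.

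Two local repairs are needed. First, your bound on the convection term by $\|\partial_{x_N}\psi\|_\infty\int_{\rr^N}|u_n|^q$ is problematic: for $q<1$ a function in $L^1(\rr^N)\cap L^\infty(\rr^N)$ may well have $\int_{\rr^N}|u|^q=\infty$ (e.g.\ $u\sim|x|^{-N}(\log|x|)^{-2}$ at infinity), and none of your uniform estimates excludes this for $u_n(t)$. The correct bound is H\"older's inequality against the conjugate norm, $\int_0^t\int_{\rr^N}|u_n|^q|\partial_{x_N}\psi|\le t\,M^q\|\partial_{x_N}\psi\|_{L^{1/(1-q)}(\rr^N)}$, which is finite because $\psi$ has compact support. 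Note that this is $O(t)$ for \emph{every} $q\in(0,1)$: contrary to what you write, the restriction $q>1-1/N$ plays no role in making this time integral finite. Where it is genuinely needed (besides mass conservation) is in your tail bound (iii): cutting off at scale $R$, the flux contributes $\lesssim t\,M^qR^{N(1-q)-1}$, and the exponent $N(1-q)-1$ is negative exactly when $q>1-1/N$. Second, your item (iv) is a placeholder rather than an estimate: monotone $L^p$ norms and $L^\infty$ smoothing alone do not yield strong $L^1_{\rm loc}$ compactness or convergence in $C_{\rm loc}((0,\infty);L^1(\rr^N))$. For $\mathcal{L}=-\Delta$ the mechanism is the energy estimate together with the $L^2((\tau,T);H^{-1}(\Omega))$ bound on $\partial_t u_n$ (both uniform in $n$, as they depend only on the mass), which allow one to invoke Aubin--Lions--Simon; this is precisely~\eqref{eq.HEEnergyEstimate}--\eqref{eq.HEEstimateTimeDerivative} in the paper.
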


Existence of solutions with a tail control for  $q>1$ in dimension $N=1$ was proved in~\cite{EVZArma}  under the restrictions $q\in (1,2)$ if $\mathcal{L}=-\Delta_{x'}$ and $q\in(1,3)$ if $\mathcal{L}=-\Delta$. Higher dimensions $N>1$ were covered in~\cite{EVZIndiana}, with the restrictions $q\in (1,(N+1)/(N-1))$, $q\le 2$.

%%%%%%%%%
\subsection{Application to  large-time behaviour}

Our interest in solutions to~\eqref{eq:source.type} stems from the role they play in the description of the large-time behaviour of bounded solutions to
\begin{equation}
\label{eq:main}
  \partial_tu-\Delta u+\partial_{x_N}(|u|^{q-1}u)=0\quad\text{in }Q, \qquad u(0)=u_0\quad\text{on }\mathbb{R}^N,
\end{equation}
with initial datum $u_0\in L^1(\mathbb{R}^N)$. Indeed, in the slow convection range $q>1$, the large-time behaviour of solutions to this problem is given, if $\displaystyle M=\int_{\mathbb{R}^N} u_0\neq 0$, by
\begin{equation}
\label{eq:convergence}
t^{\max\{\frac N2, \frac{N+1}{2q}\}(1-\frac 1p)}\|u(t)-U(t)\|_{L^p(\mathbb{R}^N)}\to0\quad\textup{as }t
\to\infty
\end{equation}
for all $p\in[1,\infty)$, where $U$  is the unique very weak (if $q\ge q_*:=1+\tfrac1N$) or entropy (if $q<q_*$) source-type solution  in $Q$ with mass $M$ of
\begin{align}
    \label{eq:diffusion}
    \partial_tU-\Delta U  =0\quad&\textup{if }q>q_*,\\
    \label{eq:critical}
    \partial_tU-\Delta U + \partial_{x_N} (|U|^{q-1}U)=0\quad&\textup{if } q=q_*,\\
    \label{eq:convection}
    \partial_tU-\Delta_{x'}{U} + \partial_{x_N}( |U|^{q-1}U) =0\quad&\textup{if } q\in(1,q_*),
\end{align}
as proved in the remarkable series of papers~\cite{EZ,EVZArma,EVZIndiana, EsZu97} for nonnegative solutions and in~\cite{Carpio1996} for solutions with sign changes;  see also~\cite{Zua20}.

The only result on long-term behaviour for solutions to~\eqref{eq:main} with $q<1$ known to date was given in~\cite{LaurencotFast}, where the author shows in the one-dimensional case $N=1$, assuming also that the solution is nonnegative,  that for all $q\in(0,1)$ the function giving the limit is a source-type solution to~\eqref{eq:convection}, as expected, since $1<q_*$. No results for $q<1$ were available in several dimensions, the main obstruction being the lack of a uniqueness theory for source-type solutions to the limit problem. Our uniqueness results will allow us to overcome this difficulty, proving the large-time behaviour for nonnegative solutions in any dimension if $q\in (1-1/N,1)$. We recall that the restriction from below for $q$ is not technical, since it is required to  guarantee that the mass is conserved along the evolution,
$$
\int_{\mathbb{R}^N} u(\cdot,t) =\int_{\mathbb{R}^N} u_0\quad\text{for all }t>0.
$$

\medskip

\begin{theorem}
\label{thm:long.time.behaviour}
Let $q\in (1-1/N,1)$ and  $\mathcal{L}=-\Delta$ or $\mathcal{L}=-\Delta_{x'}$.
Let  $u\in C([0,\infty);L^1(\rr^N))$   be a nonnegative  bounded entropy solution to~\eqref{eq:main} with $u_0\in L^1(\mathbb{R}^N)\cap L^\infty(\mathbb{R}^N)$. Let $U$ be the unique source-type solution to~\eqref{eq:source.type} with $M=\int_{\mathbb{R}^N}u_0$. Then, for all $p\in[1,\infty)$,
$$
t^{\frac{N+1}{2q}(1-\frac 1p)}\|u(t)-U(t)\|_{L^p(\mathbb{R}^N)}\to0\quad\textup{as }t
\to\infty.
$$
\end{theorem}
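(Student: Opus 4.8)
The natural strategy is the four-step scaling-and-compactness method of Kamin--V\'azquez, adapted to the anisotropic self-similar structure of the limit problem. The plan is to exploit the invariance of the convection equation~\eqref{eq:convection}: for $\lambda>0$ introduce the rescaled family
$$
u_\lambda(t,x',x_N):=\lambda^\alpha u(\lambda t,\lambda^{1/2}x',\lambda^\beta x_N),\qquad \alpha=\frac{N+1}{2q},\quad \beta=\frac{N+1-q(N-1)}{2q}.
$$
Using the identities $\alpha q+\beta=\alpha+1$ and $\tfrac{N-1}{2}+\beta=\alpha$ (the latter being exactly mass conservation), a direct computation shows that each $u_\lambda$ is a nonnegative entropy solution, with conserved mass $M$, of
$$
\partial_t u_\lambda-\Delta_{x'}u_\lambda-\lambda^{1-2\beta}\partial_{x_N}^2u_\lambda+\partial_{x_N}(|u_\lambda|^{q-1}u_\lambda)=0,
$$
and that $U$ is invariant under this scaling. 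Since $2\beta>1$ in the fast convection range, the coefficient $\lambda^{1-2\beta}\to0$ as $\lambda\to\infty$, so the $x_N$-diffusion formally drops out and one expects convergence to a solution of~\eqref{eq:convection}; when the diffusion in~\eqref{eq:main} is already $-\Delta_{x'}$ this term is simply absent and the family solves the limit equation exactly. A change of variables identifies the quantity to control: with $t=\lambda$ one has $t^{\alpha(1-1/p)}\|u(t)-U(t)\|_{L^p}=\|u_\lambda(1)-U(1)\|_{L^p}$, so the theorem reduces to proving $u_\lambda(1)\to U(1)$ in $L^p(\rr^N)$ as $\lambda\to\infty$.

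The first task is to gather uniform-in-$\lambda$ a priori estimates for $\{u_\lambda\}$. Mass conservation gives a uniform $L^1$ bound; the convection-enhanced smoothing and decay estimates for~\eqref{eq:main}, of the self-similar form $\|u(t)\|_{L^\infty}\le Ct^{-\alpha}$ for large $t$, yield uniform $L^\infty$ bounds on $\{u_\lambda(t)\}$ for $t$ in compact subsets of $(0,\infty)$ (note that the pure heat rate $t^{-N/2}$ is too slow here and would not close the estimate). The $L^1$-contraction of entropy solutions, together with the spatial regularising effect of the diffusion and a uniform tightness estimate ruling out escape of mass to infinity, provides uniform control of space and time translates. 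Via a Fr\'echet--Kolmogorov/Arzel\`a--Ascoli argument these yield a subsequence $u_{\lambda_n}\to w$ in $C([\tau_1,\tau_2];L^1(\rr^N))$ for every $0<\tau_1<\tau_2<\infty$.

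The second task is to identify the limit. Passing to the limit in the entropy inequalities for $u_{\lambda_n}$, and using that $\lambda_n^{1-2\beta}\partial_{x_N}^2u_{\lambda_n}\to0$, shows that $w$ is a nonnegative entropy solution of~\eqref{eq:convection} on $(0,\infty)$. The crucial remaining point is the behaviour as $t\to0^+$: one must show that $w$ attains the datum $M\delta_0$ and satisfies the tail-control condition~\eqref{eq:tail.extra.hyp}. Mass conservation (exactly what forces $q>1-1/N$) keeps the total mass equal to $M$, while propagating the tightness estimates down to $t=0$ prevents spreading and forces $w(t)\rightharpoonup M\delta_0$. Once $w$ is recognised as an entropy source-type solution of~\eqref{eq:source.type} with mass $M$ satisfying~\eqref{eq:tail.extra.hyp}, the uniqueness Theorems~\ref{thm:uniqueness.non-negative}--\ref{thm:uniqueness.no.sign} give $w=U$. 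As the limit is independent of the subsequence, the whole family converges, $u_\lambda\to U$ in $C([\tau_1,\tau_2];L^1)$, and the uniform $L^\infty$ bounds upgrade the convergence at $t=1$ to $L^p$ for every $p\in[1,\infty)$ by interpolation, which is the desired statement.

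I expect the main obstacle to be the initial-trace analysis of the limit $w$: establishing uniform tightness of $\{u_\lambda(t)\}$ down to $t=0^+$ and verifying that the anisotropic rescaling does not let mass leak out in the $x_N$-direction, so that both the concentration $w(0)=M\delta_0$ and the tail-control hypothesis~\eqref{eq:tail.extra.hyp} hold for $w$. This is precisely the hypothesis under which our uniqueness theorem applies, and it is where the restriction $q>1-1/N$ enters decisively. Securing these estimates for entropy solutions of the fast-convection problem, where the nonlinearity is merely H\"older continuous and the $C^1$-based arguments of the slow convection range fail, is the technical heart of the proof.
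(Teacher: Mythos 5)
Your proposal follows the same route as the paper: the identical anisotropic rescaling (the paper's $\gamma$ equals your $\alpha=\tfrac{N+1}{2q}$), the observation that the rescaled $x_N$-diffusion carries the vanishing factor $\lambda^{1-2\beta}$, the reduction to $u_\lambda(1)\to U(1)$, compactness plus identification of the limit as a source-type solution of~\eqref{eq:convection} satisfying~\eqref{eq:tail.extra.hyp}, Theorem~\ref{thm:uniqueness.non-negative} to get convergence of the whole family, and interpolation with the hyperbolic $L^\infty$ decay~\eqref{eq.HEDecayOfLpNorm} to reach all $p\in[1,\infty)$. Your remark that the parabolic rate $t^{-N/2}$ is not compatible with this scaling is exactly the paper's reason for the choice of $\beta$.

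There is, however, a genuine gap in your compactness step, which is the technical core. You derive ``uniform control of space and time translates'' from the $L^1$-contraction together with ``the spatial regularising effect of the diffusion''; neither mechanism works here. The contraction principle bounds $\|u_\lambda(t,\cdot+h)-u_\lambda(t,\cdot)\|_{L^1(\rr^N)}$ by $\|u_{0,\lambda}(\cdot+h)-u_{0,\lambda}\|_{L^1(\rr^N)}=\|u_0(\cdot+(\lambda^{1/2}h',\lambda^\beta h_N))-u_0\|_{L^1(\rr^N)}$, and since the rescaled data concentrate to $M\delta_0$ this tends to $2M$ for any fixed $h\neq0$: no uniform modulus of continuity survives as $\lambda\to\infty$ (the same degeneracy kills time translates). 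Likewise, the diffusion gives no regularisation in the $x_N$-direction uniformly in $\lambda$: its coefficient in the rescaled equation is $\lambda^{1-2\beta}\to0$ (case $\mathcal{L}=-\Delta$) or it is absent (case $\mathcal{L}=-\Delta_{x'}$), and accordingly the energy estimate~\eqref{eq.HEEnergyEstimate} only controls $\nabla_{x'}u_\lambda$. All estimates must depend on the data only through the mass, and the missing compactness in $x_N$ is of \emph{hyperbolic} origin: a Hoff/Ole\u{\i}nik-type one-sided estimate for the flux, yielding the uniform shift bound~\eqref{translation.N} for $t\ge\tau>0$ with constants depending only on $\|w_0\|_{L^1(\rr^N)}$. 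This, combined with the $x'$-energy estimate, the bound $\partial_t u_\lambda\in L^2((\tau,T);H^{-1}(\Omega))$ of~\eqref{eq.HEEstimateTimeDerivative}, and an Aubin--Lions--Simon-type argument (\cite[Lemma C.2]{JFL}, \cite{Simon}), is what actually produces the strong $L^1_{\textup{loc}}$ convergence; without it your limit $w$ is not known to exist as a strong limit and the identification step cannot start. Two secondary points: the initial trace and~\eqref{eq:tail.extra.hyp} for the limit are settled by the rescaled tail-control estimate of Lemma~\ref{lem.FutherPropEntropy}(c), not by generic tightness; and in the case $\mathcal{L}=-\Delta_{x'}$ the paper does not work with the degenerate equation directly as you propose, but adds a vanishing viscosity $\varepsilon\partial^2_{x_Nx_N}$ and passes to a diagonal limit in $(\lambda,\varepsilon)$, because the quoted estimates are obtained through that regularisation.
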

\begin{remark}
    The convergence result yields an alternative existence proof of a source-type solution  satisfying the tail-control assumption~\eqref{eq:tail.extra.hyp} for the case $\mathcal{L}=-\Delta_{x'}$.
\end{remark}

The (parabolic and hyperbolic) estimates needed for compactness can be obtained as in paper~\cite{JFL}.
We will give a brief sketch in Section~\ref{sec:AsymptoticBehaviour}.  We point out that such compactness arguments are not valid for solutions with sign changes.

\subsection{Some open problems}

\noindent\textsc{Existence for $q\in (2,3)$ in dimension $N=2$. } As we have mentioned, the existence results for source-type solutions in \cite{EVZIndiana} for $N>1$ have the restrictions $q<(N+1)/(N-1)$, $q\le 2$. The first one has an essential nature, at least when $\mathcal{L}=-\Delta_{x'}$. But the second one, stemming from the hyperbolic techniques used in the proof, might be technical. Hence, it would be interesting whether there are source-type solutions for $q\in (2,3)$ in dimension $N=2$, or, on the contrary, if they do not exist.

\noindent\textsc{Non-existence for $q\ge (N+1)/(N-1)$. } Our guess would be that no source-type solutions exist for $q\ge (N+1)/(N-1)$. However, existence has not yet been excluded in this range  when $\mathcal{L}=-\Delta$, or when $\mathcal{L}=-\Delta_{x'}$ if sign changes are allowed, even under some tail-control assumption.

\noindent\textsc{Long-time behaviour for solutions with sign changes. } The compactness arguments that we are using here to obtain the large-time behaviour for nonnegative solutions, borrowed from~\cite{JFL}, are not able to deal with the nonlinear term for solutions that change sign. New compactness arguments are hence required. Nevertheless, if compactness were available, the required uniqueness for the expected limit is already provided by the present paper.

In the one dimensional case, the method used in \cite{cazacu} to obtain the long time behaviour of changing sign solutions, based on the compensated-compactness techniques from \cite{tartar}, cannot be used here, since the nonlinearity is not $C^1$. The issue of how to extend the  compensated-compactness method in the context of continuous flows remains to be investigated.

\noindent\textsc{The case with integral 0. } If we were able to prove~\eqref{eq:convergence} for solutions with sign changes, in the case $M=\int_{\mathbb{R}^N}u_0 =0$, the result would only say that
$$
\|u(\cdot,t)\|_{L^p(\mathbb{R}^N)}=o\big(t^{\max\{\frac N2, \frac{N+1}{2q}\}(1-\frac 1p)}) \  \text{as}\ t\rightarrow \infty,
$$
and would give no information about the asymptotic shape of the solution. The first step to understand the large-time behaviour in this interesting case would be to obtain a sharp decay rate for the solution.

\subsection{Organization of the paper} The rest of the paper is organized as follows. We gather in Section~\ref{sect-Entropy} some preliminary material on entropy solutions (definition, properties, estimates) that will be required later. Section~\ref{sec:Uniqueness.nonnegative} is devoted to prove the uniqueness of nonnegative source-type solutions. In Section~\ref{sect-SignChanges} we prove that source-type solutions that may change sign, do not so if they satisfy the tail-control property~\eqref{eq:tail.extra.hyp}. Existence of source-type solutions is also discussed here. Finally, as an application of our uniqueness results, we obtain the large time behaviour of nonnegative solutions to problem~\eqref{eq:main} with  {bounded} and integrable initial data in Section~\ref{sec:AsymptoticBehaviour}.

%%%%%%%%%%%%%%%%%%%%%%%%%%%%%%%%%%%%%
\section{Entropy formulation}
\label{sect-Entropy} \setcounter{equation}{0}

In this section we present the entropy framework for the problem
\begin{equation}\label{eq.fLgeneral}
\partial_tu+\mathcal{L} u+\partial_{x_N}(f(u))=0\quad\text{in }Q, \qquad u(0)=u_0\quad\text{on }\mathbb{R}^N,
\end{equation}
where $f(u)=|u|^{q-1}u$, $1-\frac 1N<q\neq 1$, $\mathcal{L}$ is either $-\Delta$ or $-\Delta_{x'}$, and $u_0$ is at least a finite measure, gathering the preliminary material concerning solutions and sub- and supersolutions that will be required later.

\subsection{Entropy solutions}

We start with the definition of entropy solution.

\begin{definition}[Entropy solution]\label{def.entropySolution}
A function $u$ is an \emph{entropy solution} of \eqref{eq.fLgeneral} if:
\begin{enumerate}[{\rm (a)}]
\item \textup{(Regularity)} $u\in   {L^\infty_\textup{loc}((0,\infty);L^\infty(\rr^N))}$ and $\nabla u\in L_\textup{loc}^2(Q)$.
\item \textup{(Entropy inequality)} For all $k\in\rr$ and all $0\leq \phi\in C_\textup{c}^\infty(Q)$,
\begin{equation}\label{eq:entropy.inequality}
\iint_Q \Big( |u-k|\partial_t\phi + \sgn(u-k)\big(f(u)-f(k)\big)\partial_{x_N}\phi-|u-k|(\mathcal{L}\phi) \Big)\geq0.
\end{equation}
\item\textup{(Initial data in the sense of finite measures)} For all $\psi\in C_\textup{b}(\rr^N)$,
\begin{equation}
	\label{eq:initial.data}
\esslim _{t\to0^+}\int_{\rr^N}u(t)\psi= \int_{\rr^N}\psi\,{\rm d}u_0.
\end{equation}
\end{enumerate}
\end{definition}

\begin{remark}\label{regularity.issue}
\begin{enumerate}[{\rm (a)}]
\item If $u_0 \in L^1(\rr^N)\cap L^\infty(\rr^N)$, condition~\eqref{eq:initial.data} is implied if the data is taken in an $L^1$-sense. It is then standard to show that instead of assuming the $L^1$-continuity at $t=0$, we can add the term $\int_{\rr^N}|u_0-k|\phi(0)$ on the left-hand side of the entropy inequality~\eqref{eq:entropy.inequality}  and take $\phi\in C_{\rm c}^\infty(\overline{Q})$.

\item It is also straightforward to show that classical solutions of \eqref{eq.fLgeneral} are entropy solutions, and that entropy solutions are \emph{very weak solutions} of that problem, i.e.:
$$
 \iint_Q \big(u \partial_t\phi+f(u)\partial_{x_N}\phi - u(\mathcal{L}\phi)\big)+\int_{\rr^N}\phi(0)\,{\rm d}u_0=0 \quad\text{for all }\phi\in C_\textup{c}^\infty(\overline{Q}).
$$
\end{enumerate}
\end{remark}

Let us now collect some more or less standard results for entropy solutions. The proof can be deduced by following \cite{panov2020} or \cite{AnBr20} with $\alpha=2$.

\begin{lemma}\label{lem.UniquenessPropertiesEntropy}
{\rm (a)} Assume $p\in[1,\infty)$ and $u_0\in L^\infty(\rr^N)$. Then there is \emph{at least} one entropy solution $u\in L^\infty(Q)\cap C([0,\infty);L_\textup{loc}^p(\rr^N))$ and $\nabla u\in L_{\textup{loc}}^2(Q)$ of~\eqref{eq.fLgeneral} with initial data $u_0$. Moreover, let $u,\bar{u}$ be entropy solutions of~\eqref{eq.fLgeneral} with initial data $u_0,\bar{u}_0$, respectively. Then:
\begin{enumerate}[{\rm (i)}]
\item\textup{(Comparison principle)} If $u_0\leq \bar{u}_0$ a.e.~on $\rr^N$, then $u\leq \bar{u}$ a.e.~in $Q$.
\item\textup{($L^1$-contraction)}  If $u_0-\bar{u}_0\in L^1(\rr^N)$, then, for a.e.~$t>0$,
$$
\int_{\rr^N}|u(t)-\bar{u}(t)|\leq \int_{\rr^N}|u_0-\bar{u}_0|.
$$
\item\textup{($L^\infty$-bound)} For a.e.~$(t,x)\in Q$,  $\essinf_{x\in\rr^N}u_0(x)\leq u(t,x)\leq\esssup_{x\in\rr^N}u_0(x)$.
\item\textup{($L^1$-bound)} For a.e.~$t>0$, $\|u(t)\|_{L^1(\rr^N)}\leq \|u_0\|_{L^1(\rr^N)}$.
\item\textup{(Mass conservation)} For a.e.~$t>0$,
$$
\int_{\rr^N}u(t)=\int _{\rr^N}u_0.
$$
\end{enumerate}
\noindent{\rm (b)} Assume, in addition, $u_0\in L^1(\rr^N)\cap L^\infty(\rr^N)$. Then there is \emph{at most} one entropy solution $u$ of~\eqref{eq.fLgeneral} with initial data $u_0$, and moreover, $u\in C([0,\infty);L^1(\rr^N))$.
\end{lemma}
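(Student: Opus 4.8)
The plan is to treat this lemma as a bundle of standard facts for entropy solutions of degenerate convection--diffusion equations, whose engine is the Kruzhkov doubling of variables. The core is to establish the comparison principle (i) and the $L^1$-contraction (ii); the $L^\infty$-bound (iii), the $L^1$-bound (iv) and mass conservation (v) then follow by inserting special comparison functions, and the uniqueness in part (b) is an immediate corollary of the contraction.

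First I would settle existence. Since $\mathcal{L}=-\Delta_{x'}$ carries no diffusion in the $x_N$ direction, I would regularize by adding an artificial viscosity $-\eps\partial_{x_N}^2$ (for $\mathcal{L}=-\Delta$ the operator is already uniformly parabolic) and mollify $u_0$. Standard quasilinear parabolic theory yields smooth solutions $u_\eps$, for which comparison with the constant barriers $\essinf u_0$ and $\esssup u_0$ gives a uniform $L^\infty$ bound, while uniform $L^1$ bounds and equicontinuity of space and time translates (inherited from the $L^1$-contraction for the approximants and translation invariance) provide the strong $L^1_{\mathrm{loc}}$ compactness needed to pass to the limit $\eps\to0$ in the nonlinear flux. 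The entropy inequality~\eqref{eq:entropy.inequality} survives because its left-hand side is weakly lower semicontinuous, and the initial trace~\eqref{eq:initial.data} is preserved under the $L^1_{\mathrm{loc}}$ convergence.

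The heart is the doubling argument for (i)--(ii). I would write~\eqref{eq:entropy.inequality} for $u(t,x)$ with $k=\bar u(s,y)$, and symmetrically for $\bar u(s,y)$ with $k=u(t,x)$, against the standard test function $\phi(t,x,s,y)=\rho_\delta(t-s)\rho_\delta(x-y)\,\psi\big(\tfrac{t+s}{2},\tfrac{x+y}{2}\big)$, add them, and let $\delta\to0$. Because $\mathcal{L}$ is a local second-order operator, the diffusion contributions combine into $-|u-\bar u|\,\mathcal{L}\psi$ and the flux terms into $\sgn(u-\bar u)(f(u)-f(\bar u))\partial_{x_N}\psi$, yielding the Kato inequality
\begin{equation*}
\partial_t|u-\bar u|-\mathcal{L}|u-\bar u|+\partial_{x_N}\big(\sgn(u-\bar u)(f(u)-f(\bar u))\big)\le0
\end{equation*}
in $\mathcal{D}'(Q)$. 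I stress that this step only uses that $f$ is continuous, so the failure of $f(u)=|u|^{q-1}u$ to be $C^1$ at the origin in the fast-convection range is harmless here. Replacing $|u-\bar u|$ by $(u-\bar u)^+$ throughout gives the one-sided Kato inequality behind the comparison principle (i), and integrating in $x$ against cutoffs gives the $L^1$-contraction (ii), since the convection term is a total $x_N$-derivative and $\int\mathcal{L}|u-\bar u|=0$.

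Finally, (iii) follows from (i) because every constant $k$ is an entropy solution, so $u$ is trapped between $\essinf u_0$ and $\esssup u_0$; (iv) is (ii) with $\bar u\equiv0$; and (v) I would obtain from the very weak formulation of Remark~\ref{regularity.issue}(b), tested against $\eta_R(x)\chi(t)$, by showing the diffusion and convection terms vanish as $R\to\infty$. The main obstacle I anticipate is precisely this last step: with merely $L^\infty((0,\infty);L^1\cap L^\infty)$ regularity and a sublinearly growing flux, controlling $\tfrac1R\int_{R<|x|<2R}|u|^q$ is what forces the lower bound $q>1-1/N$, since a H\"older estimate on the annulus against its volume growth $R^N$ produces the factor $R^{N(1-q)-1}$, which decays to zero exactly when $q>1-1/N$. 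Once (ii) is in hand, part (b) is immediate: equal data give $\|u(t)-\bar u(t)\|_{L^1(\rr^N)}\le\|u_0-\bar u_0\|_{L^1(\rr^N)}=0$, and the continuity $u\in C([0,\infty);L^1(\rr^N))$ follows by combining the contraction with an equicontinuity-in-time estimate for the approximants.
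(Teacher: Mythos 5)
The paper does not actually prove this lemma: it records it as ``more or less standard'' and defers entirely to \cite{panov2020} and \cite{AnBr20} (with $\alpha=2$), so the comparison is with the entropy theory developed in those references. Your outline follows the same standard route that they implement (vanishing viscosity plus mollification for existence, Kato inequalities from doubling of variables for (i)--(ii), constants and the zero solution as comparison functions for (iii)--(iv), cutoff test functions in the very weak formulation for (v), with the correct bookkeeping $R^{N(1-q)-1}\to0$ forcing $q>1-1/N$), and most of the peripheral steps are sound.

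However, the central step is wrong as you state it. In the doubling of variables with $\phi=\rho_\delta(t-s)\rho_\delta(x-y)\psi\big(\tfrac{t+s}{2},\tfrac{x+y}{2}\big)$, the second-order contributions do \emph{not} ``combine into $-|u-\bar u|\,\mathcal{L}\psi$'' by locality alone: a direct computation gives $(\Delta_x+\Delta_y)\phi=\rho_\delta(t-s)\bigl(2(\Delta\rho_\delta)(x-y)\,\psi+\tfrac12\rho_\delta(x-y)\,\Delta\psi\bigr)$, and the singular term $2(\Delta\rho_\delta)(x-y)\,\psi$, paired with the merely bounded function $|u(t,x)-\bar u(s,y)|$, does not vanish as $\delta\to0$. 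This is precisely the obstruction that makes the parabolic entropy theory harder than Kruzhkov's hyperbolic one, and it is where the requirement $\nabla u\in L^2_{\textup{loc}}(Q)$ in Definition~\ref{def.entropySolution} earns its keep: one must work with smoothed entropies $\eta_\eps\approx|\cdot|$, for which the entropy inequality holds with the extra dissipation term $-\eta_\eps''(u-k)|\nabla u|^2$ (respectively $|\nabla_{x'}u|^2$ when $\mathcal{L}=-\Delta_{x'}$), and in the doubled inequality the singular terms generate the cross term $2\eta_\eps''\,\nabla_xu\cdot\nabla_y\bar u\,\rho_\delta$, which is absorbed together with the two dissipation terms as $-\eta_\eps''\,|\nabla_xu-\nabla_y\bar u|^2\rho_\delta\le0$ (Carrillo's argument; in the anisotropic degenerate setting, Panov's). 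Without this mechanism your Kato inequality---and hence (i), (ii) and part (b)---is unproved; with it, the rest of your outline goes through, including your correct observations that only continuity of $f$ is needed for the flux terms and that $q>1-1/N$ is exactly what closes the mass-conservation estimate. A secondary soft spot: in part (a) the datum is only in $L^\infty(\rr^N)$, so your compactness argument for the viscous approximations cannot invoke ``uniform $L^1$ bounds''; one needs local translation estimates (or a localized contraction) instead.
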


\begin{remark}\label{CL1}
By an approximation argument, we obtain that for general initial data which are just finite  measures, $u\in C((0,\infty);L^1(\rr^N))$ and $\int_{\mathbb{R}^N}u(t)=\int_{\mathbb{R}^N}{\rm d}u_0$ for all  $t>0$.
\end{remark}

As in \cite{JFL}, we also need some further regularity properties whose proofs follow the lines of the mentioned paper except that $\alpha=2$.

\begin{lemma}\label{lem.FutherPropEntropy}
Assume $0\leq u_0\in L^1(\rr^N)\cap L^\infty(\rr^N)$. Then:
\begin{enumerate}[{\rm (a)}]
\item\textup{(Energy estimate)}  For all $0<\tau<T<\infty$,
\begin{equation*}
\int_\tau^T \|\mathcal{L}^{\frac{1}{2}}u(s)\|_{L^2(\rr^N)}^2\,{\rm d}s\leq\frac{1}{2}\|u(\tau)\|^2_{L^2(\rr^N)}.
\end{equation*}

\item\textup{(Preliminary estimate on the time derivative)} For all smooth bounded domain $\Omega\subset \rr^N$,
$$
\partial_t u\in L^2((0,\infty);H^{-1}(\Omega)).
$$

\item\textup{(Preliminary tail control)} Let $\rho_R(x):=\rho(x/R)$, where $0\leq \rho\leq 1$ is a $C^\infty$-functions such that $\rho\equiv 0$ if $|x|\leq 1$ and $\rho\equiv 1$ if $|x|>2$. Then, for all $R>0$ and $t>0$,
\begin{equation*}
\label{tail.control}
\begin{split}
&\int_{|x|>2R}u(t)\lesssim \int_{|x|> R}u_0+M t\|\mathcal{L}\rho_R\|_{L^\infty(\rr^N)} +
\int _0^t \int_{\rr^N} u^q|\partial_{x_N}\rho_R|.
\end{split}
\end{equation*}
\end{enumerate}
\end{lemma}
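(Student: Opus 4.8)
The three estimates all follow from testing the equation against well-chosen functions and invoking the $L^1$–$L^\infty$ bounds and mass conservation of Lemma~\ref{lem.UniquenessPropertiesEntropy}. Since an entropy solution has only $\nabla u\in L^2_{\rm loc}(Q)$ and $u\in L^\infty$, the formal computations below must be justified on the parabolic approximations used to construct $u$: one multiplies the smooth approximant by the relevant test function, obtains bounds independent of the regularization parameter, and passes to the limit using lower semicontinuity. This is exactly the scheme of~\cite{JFL} with $\alpha=2$, so I will only describe the formal identities. The main obstacle is precisely this rigorous justification of~(a), namely that an entropy solution---whose defining inequalities involve only the Kruzhkov entropies---actually satisfies the energy identity produced by multiplying by $u$; this is settled by the limiting procedure in the parabolic approximation.

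For~(a), I multiply the equation by $u$ and integrate over $\rr^N$. The time term produces $\tfrac12\frac{{\rm d}}{{\rm d}t}\|u(t)\|_{L^2(\rr^N)}^2$; the diffusion term produces $\|\mathcal{L}^{1/2}u(t)\|_{L^2(\rr^N)}^2$ after integrating by parts, using that $\mathcal{L}$ is nonnegative and self-adjoint; and the convection term vanishes, since $u\,\partial_{x_N}(f(u))=\partial_{x_N}G(u)$ with $G(s)=\int_0^s\sigma f'(\sigma)\,{\rm d}\sigma=\tfrac{q}{q+1}s^{q+1}$ is a perfect $x_N$-derivative, integrating to zero because $G(u)\in L^1(\rr^N)$ decays at infinity ($u\in L^1\cap L^\infty$, $q+1>1$). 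Integrating the resulting identity $\tfrac12\frac{{\rm d}}{{\rm d}t}\|u\|_{L^2}^2+\|\mathcal{L}^{1/2}u\|_{L^2}^2=0$ from $\tau$ to $T$ and discarding the nonnegative endpoint term $\tfrac12\|u(T)\|_{L^2(\rr^N)}^2$ yields the claim.

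For~(b), I read the equation as $\partial_t u=-\mathcal{L}u-\partial_{x_N}(f(u))$ and estimate each term in $H^{-1}(\Omega)$. The diffusion part satisfies $\|\mathcal{L}u(t)\|_{H^{-1}(\Omega)}\lesssim\|\mathcal{L}^{1/2}u(t)\|_{L^2(\Omega)}$, whose square is integrable in time over $(0,\infty)$ by part~(a), since $\|u(\tau)\|_{L^2(\rr^N)}\le\|u_0\|_{L^2(\rr^N)}$ permits letting $\tau\to0^+$. The convection part satisfies $\|\partial_{x_N}(f(u))(t)\|_{H^{-1}(\Omega)}\le\|f(u(t))\|_{L^2(\Omega)}\le|\Omega|^{1/2}\|u(t)\|_{L^\infty(\rr^N)}^q$, which is bounded by the $L^\infty$-bound of Lemma~\ref{lem.UniquenessPropertiesEntropy}; the global-in-time square integrability of this term relies on the decay furnished by the $L^\infty$-smoothing estimate in the admissible range of $q$, a secondary technical point.

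For~(c), I use that entropy solutions are very weak solutions (Remark~\ref{regularity.issue}(b)) and test with $\rho_R$ against time cut-offs, then remove the cut-offs by density, which is licit because $\mathcal{L}\rho_R$ and $\partial_{x_N}\rho_R$ are supported in the bounded annulus $\{R<|x|<2R\}$. Integrating by parts in space and in time gives
$$
\int_{\rr^N} u(t)\rho_R=\int_{\rr^N} u_0\rho_R-\int_0^t\!\!\int_{\rr^N} u\,(\mathcal{L}\rho_R)+\int_0^t\!\!\int_{\rr^N} f(u)\,\partial_{x_N}\rho_R.
$$
I then bound from below $\int_{\rr^N} u(t)\rho_R\ge\int_{|x|>2R}u(t)$ (as $\rho_R\equiv1$ there and $u\ge0$), from above $\int_{\rr^N} u_0\rho_R\le\int_{|x|>R}u_0$ (as $\rho_R\equiv0$ on $\{|x|\le R\}$), the diffusion term by $\int_0^t\int_{\rr^N} u\,|\mathcal{L}\rho_R|\le Mt\,\|\mathcal{L}\rho_R\|_{L^\infty(\rr^N)}$ using $\|u(s)\|_{L^1(\rr^N)}\le\|u_0\|_{L^1(\rr^N)}=M$, and the convection term by $\int_0^t\int_{\rr^N} u^q|\partial_{x_N}\rho_R|$ since $f(u)=u^q$ for $u\ge0$. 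This is precisely the asserted inequality.
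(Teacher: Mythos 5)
Your proposal is correct and takes essentially the same approach as the paper, whose own ``proof'' of this lemma consists of the single remark that the arguments of \cite{JFL} apply with $\alpha=2$: your formal identities (the energy identity with the convection term vanishing as a perfect $x_N$-derivative, the $H^{-1}$ decomposition of $\partial_t u$, and testing the very weak formulation with $\rho_R$, with rigor supplied on the regularized approximations) are precisely the arguments behind that citation. The one point you compress in (b) --- global-in-time square integrability of $\|f(u(t))\|_{L^2(\Omega)}$ --- does go through, since the hyperbolic decay \eqref{eq.HEDecayOfLpNorm} with $p=\infty$ gives $\|u(t)\|_{L^\infty}^{2q}\lesssim t^{-(N+1)}$, integrable near infinity, while the $L^\infty$-bound handles $t$ near zero.
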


\begin{remark}
\begin{enumerate}[{\rm (a)}]
\item When $\mathcal{L}=-\Delta_{x'}$,
$$
\|\mathcal{L}^{\frac{1}{2}}u(s)\|_{L^2(\rr^N)}^2=\int_{\rr^N}|\nabla_{x'}u(s)|^2,
$$
which gives us no control on the derivative in the direction $x_N$.
\item By an approximation argument, we obtain that for $u_0=M\delta_{0}$,
$$
\int_{|x|>2R}u(t)\lesssim M t\|\mathcal{L}\rho_R\|_{L^\infty(\rr^N)} +
\int _0^t \int_{\rr^N} u^q|\partial_{x_N}\rho_R|.
$$
\end{enumerate}
\end{remark}

\subsection{Entropy subsolutions}
In this subsection, we collect results on entropy sub- and supersolutions from \cite{panov2020, panov2022}. We denote the respective positive and negative parts by $s^+=\max\{s,0\}$ and $s^-=-\min\{s,0\}=\max\{-s,0\}$. So $s=s^+-s^-$ and $|s|=s^++s^-$.

\begin{definition}[Entropy subsolution]\label{def.sub-entropySolution}
A function $u$ is an \emph{entropy subsolution} of~\eqref{eq.fLgeneral} if:
\begin{enumerate}[{\rm (a)}]
\item \textup{(Regularity)} $u\in  L^\infty (Q)$ and $\nabla u\in L^2_{\textup{loc}}(Q)$.

\item \textup{(Entropy inequality)} For all $k\in\rr$ and all $0\leq \phi\in C_\textup{c}^\infty(Q)$,
\begin{equation}\label{eq:sub.entropy.inequality}
\iint_Q \Big( (u-k)^+\partial_t\phi + \sgn^+(u-k)\big(f(u)-f(k)\big)\partial_{x_N}\phi-(u-k)^+(\mathcal{L}\phi) \Big)\geq0.
\end{equation}
\item\textup{(Initial data)} \begin{equation}
	\label{eq:sub.initial.data}
\esslim _{t\to0^+} (u(t,x)-u_0(x))^+=0 \quad \text{in $L^1_{\textup{loc}}(\rr^N)$.}
\end{equation}
\end{enumerate}
\end{definition}

As before, it is possible to define the above concepts as one integral inequality \cite[Prop.~2.1]{panov2022}: For all $k\in\rr$ and all $0\leq \phi\in C_\textup{c}^\infty(\overline{Q})$
\begin{equation}
\label{eq:sub.entropy.inequality.full}
  \iint_Q \Big( (u-k)^+\partial_t\phi + \sgn^+(u-k)\big(f(u)-f(k)\big)\partial_{x_N}\phi-(u-k)^+(\mathcal{L}\phi) \Big)+\int_{\rr^N}(u_0-k)^+\phi(0)\geq0.
\end{equation}

In a similar way, a function $u$ is called an \emph{entropy supersolution} if
\[
\iint_Q \Big( (k-u)^+\partial_t\phi + \sgn^+(k-u)\big(f(k)-f(u)\big)\partial_{x_N}\phi-(k-u)^+(\mathcal{L}\phi)\Big)+\int_{\rr^N}(k-u_0)^+\phi(0)\geq0.
\]

Later the below properties are used on $Q$.

\begin{proposition}\label{results.subsolutions}
The following holds for equation \eqref{eq.fLgeneral}:
\begin{enumerate}[{\rm (a)}]
\item A function $u$ is an entropy solution iff it is an entropy sub- and supersolution.

\item All entropy subsolutions satisfy
\[
\partial_tu+\mathcal{L} u+\partial_{x_N}f(u)\leq 0 \quad \text{in $\mathcal{D}'(Q)$}.
\]

\item {If $u_1$ and $u_2$ are entropy subsolutions, then $u=\max\{u_1,u_2\}$ is also an entropy subsolution.}	

\item If there exists a unique entropy solution $u$, then all entropy sub- and supersolutions $u_{-},u_{+}$ satisfy
\[ 	
u_{-}\leq u \leq u_{+} \quad \text{a.e. in $Q$.}
\]
\end{enumerate}
\end{proposition}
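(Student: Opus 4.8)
My plan is to obtain (a) and (b) directly from the defining integral inequalities and to reduce (c) and (d) to the doubling-of-variables machinery of Kruzhkov--Carrillo--Panov. For (a), the elementary identities $|s|=s^++(-s)^+$ and $\sgn(s)=\sgn^+(s)-\sgn^+(-s)$ show that the sum of the subsolution inequality \eqref{eq:sub.entropy.inequality} and of the supersolution inequality, both written for the same $u$ and the same level $k$, is exactly the entropy inequality \eqref{eq:entropy.inequality}; the two one-sided initial conditions add up to the two-sided one since $(u(t)-u_0)^++(u_0-u(t))^+=|u(t)-u_0|$. This gives that an entropy sub- and supersolution is an entropy solution. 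For the converse I would write $(u-k)^+=\tfrac12(|u-k|+(u-k))$ and combine, with weights $\tfrac12$, the Kruzhkov inequality \eqref{eq:entropy.inequality} with the linear identity obtained by testing the very weak formulation (Remark~\ref{regularity.issue}(b)) against $0\le\phi\in C_{\rm c}^\infty(Q)$ and subtracting its analogue with $u$ replaced by the constant $k$; the $k$-terms all vanish for compactly supported $\phi$ (in particular $\iint_Q\mathcal{L}\phi=0$). The combination with a plus sign is precisely \eqref{eq:sub.entropy.inequality}, and the combination with a minus sign is the supersolution inequality.

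Assertion (b) is then immediate from \eqref{eq:sub.entropy.inequality} upon choosing $k\le-\|u\|_{L^\infty(Q)}$: in that case $u-k\ge0$ a.e., so $(u-k)^+=u-k$ and $\sgn^+(u-k)\equiv1$, and after discarding the constant $k$-contributions one is left with $\iint_Q(u\partial_t\phi+f(u)\partial_{x_N}\phi-u\mathcal{L}\phi)\ge0$, that is, $\partial_tu+\mathcal{L}u+\partial_{x_N}f(u)\le0$ in $\mathcal{D}'(Q)$.

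For (c) and (d) the engine is the doubling of the time--space variables adapted to the second-order operator $\mathcal{L}$, as developed in \cite{panov2020,panov2022}. For (c), writing $u=\max\{u_1,u_2\}$ and $(u-k)^+=\max\{(u_1-k)^+,(u_2-k)^+\}$, I would pair the subsolution inequalities for $u_1$ and $u_2$ across two sets of variables and pass to the diagonal to derive a Kato-type inequality; together with $\nabla\max\{u_1,u_2\}=\mathbf 1_{\{u_1\ge u_2\}}\nabla u_1+\mathbf 1_{\{u_1<u_2\}}\nabla u_2$ this yields \eqref{eq:sub.entropy.inequality} for $u$, while the initial condition passes to the maximum because $(\max\{u_1,u_2\}-u_0)^+\le(u_1-u_0)^++(u_2-u_0)^+\to0$. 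For (d), the unique entropy solution $u$ is simultaneously a sub- and a supersolution by (a); the comparison between the subsolution $u_-$ and the supersolution $u$ furnished by the same doubling argument gives, for a.e.\ $t>0$,
\[
\int_{\rr^N}(u_-(t)-u(t))^+\le\liminf_{s\to0^+}\int_{\rr^N}(u_-(s)-u(s))^+,
\]
and the right-hand side vanishes since $(u_-(s)-u(s))^+\le(u_-(s)-u_0)^++(u_0-u(s))^+\to0$ by the one-sided initial data. Hence $u_-\le u$ a.e., and $u\le u_+$ follows symmetrically.

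The main obstacle is the doubling step underlying (c) and (d): one must control the convection flux $f(u)=|u|^{q-1}u$ jointly with the diffusion generated by $\mathcal{L}$ on the contact set $\{u_1=u_2\}$ (respectively $\{u_-=u\}$), and in the fast-convection range $f$ is only H\"older continuous, not $C^1$, near $u=0$. This is exactly where the full entropy structure, rather than the mere weak formulation, is indispensable, and where I would rely on Panov's formulation, which is tailored to such non-Lipschitz fluxes; it is also why the statement is phrased conditionally on the existence of a unique entropy solution, the regime in which the comparison machinery is available.
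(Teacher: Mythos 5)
Your handling of (a) and (b) is correct and self-contained: summing the sub- and supersolution inequalities gives the Kruzhkov inequality, conversely averaging \eqref{eq:entropy.inequality} with the linear very weak formulation of Remark~\ref{regularity.issue}(b) gives the two one-sided inequalities, and choosing $k<-\|u\|_{L^\infty(Q)}$ yields (b). One caveat: the initial conditions in Definitions~\ref{def.entropySolution} and~\ref{def.sub-entropySolution} are taken in different senses (finite measures versus one-sided $L^1_{\textup{loc}}$ convergence), so the ``iff'' in (a) needs a word about data that you do not supply; the paper sidesteps this --- and indeed all of (a)--(c) --- by citing \cite{panov2020,panov2022}, which is also where the doubling-of-variables work you defer to for (c) actually lives, so for (c) your proposal and the paper are essentially at the same level of detail.

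The genuine gap is in (d). You derive $u_-\le u$ from a direct Kato-type comparison inequality between an arbitrary entropy subsolution and an entropy supersolution. Such a direct sub/super comparison is precisely what is \emph{not} available for merely continuous, non-Lipschitz fluxes $f(u)=|u|^{q-1}u$, $q<1$, and merely bounded solutions: if it held, any two bounded entropy solutions with the same data would each be a sub- and a supersolution of the other, forcing uniqueness of bounded entropy solutions --- but uniqueness in this setting is only known under extra integrability (Lemma~\ref{lem.UniquenessPropertiesEntropy}(b) requires $u_0\in L^1(\rr^N)\cap L^\infty(\rr^N)$), and its possible failure for general bounded data is exactly why part (d) is stated \emph{conditionally} on uniqueness. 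Tellingly, your argument for (d) never actually uses the uniqueness hypothesis, a structural sign that it proves too much. The paper's mechanism is different and weaker: by \cite[Theorem~1.1]{panov2022}, the largest (resp.\ smallest) entropy solution coincides with the largest entropy subsolution (resp.\ smallest entropy supersolution); under the uniqueness hypothesis, $u$ is simultaneously the largest and the smallest entropy solution, whence $u_-\le u\le u_+$ for every subsolution $u_-$ and supersolution $u_+$. Replacing your Kato step by this max/min identification repairs (d). (A secondary technical point: your Kato inequality integrates $(u_--u)^+$ over all of $\rr^N$, but sub- and supersolutions here are only in $L^\infty(Q)$, so even formulating that inequality would require a localization with error terms.)
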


\begin{proof}
The first four properties follow from  \cite{panov2020,panov2022}. Also \cite[Theorem~1.1]{panov2022} shows that the biggest/smallest entropy solution is  also the biggest/smallest entropy sub-/supersolution.  When the uniqueness of the entropy solution $u$ is guaranteed, it satisfies the required inequality.
\end{proof}
%%%%%%%%%%%%%%%%%%%%%%%%%%%%%%%%%%%%%%%%%%%%%%%%%%%%%%%%%%%%%%%%%%%%%%%%%%%%%%%%%%
\section{Uniqueness of nonnegative source-type solutions}
\label{sec:Uniqueness.nonnegative}
\setcounter{equation}{0}

The goal of this section is to prove the uniqueness of nonnegative solutions to~\eqref{eq:source.type} in the fast convection range.
The proof uses some ideas from the recent paper~\cite{JFL}, in which the authors consider nonlocal diffusion problems with convection.

\begin{proof}[Proof of Theorem~\ref{thm:uniqueness.non-negative}]
We divide the proof into several steps. Assume that~\eqref{eq:source.type} has two nonnegative entropy solutions $u$ and $\ou$ with the same initial data $M\delta_0$.  In view of Remark \ref{CL1}, the solutions also belong to $L^\infty((0,\infty);L^1(\rr^N))\cap C((0,\infty);L^1(\rr^N))$.

\smallskip
\noindent(i) \emph{Integrating in the direction $x_N$.} It is easy to check that
\begin{equation}\label{integrated.u}
v(t,x'):=\int_{\rr}u(t,x',x_N)\,{\rm d}x_N \quad\textup{for   }(t,x')\in (0,\infty)\times \rr^{N-1}
\end{equation}
belongs to $C((0,\infty);L^1(\rr^{N-1}))$ and is the unique very weak solution of $\partial_tv=\Delta_{x'} v$ in $(0,\infty)\times \rr^{N-1}$ with initial data $M\delta_{0'}$ in the sense of  finite  measures. Thus, $v=M\Gamma_{N-1}$, where $\Gamma_{N-1}$ is the unique fundamental solution of the equation with mass 1, which has a self-similar form,
\[
\Gamma_{N-1}(t,x')=t^{-\frac{N-1}2}F(t^{-\frac 12}x'), \qquad F(\xi)=\frac{\textrm{e}^{-|\xi|^2/4}}{(4\pi)^{\frac{N-1}{2}}}.
\]
 After changing $u$ and $\overline{u}$ on a set of measure zero we then    have:
\begin{equation*}
\label{both.Nmass}
  \int_{\rr} u(t,x',x_N)\,{\rm d}x_N=  \int_{\rr} \ou(t,x',x_N)\,{\rm d}x_N=M\Gamma_{N-1}(t,x') \quad\textup{ {for all }}(t,x')\in (0,\infty)\times \rr^{N-1}.
\end{equation*}

\smallskip
\noindent(ii) \emph{Approximation by entropy solutions with initial data in $L^1(\rr^N)\cap L^\infty(\rr^N)$.} We start by constructing the initial data for our approximations.
Inspired by~\cite{EVZIndiana}, for each $r>0$, we define
\[
\begin{gathered}
\varphi_r(t,x')=\int_{-r}^r u(t,x',x_N)\,{\rm d}x_N-M\Gamma_{N-1}(t,x')=-\int_{|x_N|>r} u(t,x',x_N)\,{\rm d}x_N,
\\
u^r_{0,n}(x',x_N):=\Big(u(1/n,x',x_N)-\frac 1{2r}\varphi_r(1/n,x')\Big)\mathds{1}_{(-r,r)}(x_N).
\end{gathered}
\]
Notice that $ -M\Gamma_{N-1}(t,x')\leq \varphi_r(t,x')\leq 0$. Hence, the functions $u^r_{0,n}$ are nonnegative and  bounded,
\[
\|u^r_{0,n}\|_{L^\infty(\rr^{N})}\leq \| u(1/n)\|_{L^\infty(\rr^N)}+\frac{M}{2r}\|\Gamma_{N-1} (1/n)\|_{L^\infty(\rr^{N-1})}.
\]
On the other hand, they have integral $M$ since
\begin{equation}
	\label{mass.u_0n}
\int_{\rr} u^r_{0,n}(x',x_N)\,{\rm d}x_N=M\Gamma_{N-1}(1/n,x')\quad\text{for all }x'\in \rr^{N-1}.
\end{equation}
Let $u_n$ (we omit the dependence on the parameter $r$ for simplicity) be the entropy solution of~\eqref{eq.fLgeneral}  with initial data $u_{0,n}^r$. Let us prove that,   for any $t>0$, the sequence  $u_n(t)$ converges to $u(t)$ in $L^1(\rr^N)$ as $n\to\infty$. To this aim we observe that both $u_n$ and $u(\cdot +1/n)$ are entropy solutions of~\eqref{eq.fLgeneral},  whose respective initial data, $u_{0,n}^r$ and $u(1/n)$, belong to $L^1(\rr^N)\cap L^\infty(\rr^N)$. Then the $L^1$-contraction property shows that
\begin{align*}
\big\|u_n(t)-u\big(t+1/n\big)\big\|_{L^1(\rr^N)}&\leq \| u^r_{0,n}-u(1/n)\|_{L^1(\rr^N)}\\
&\leq
\int_{\rr^{N-1}}\bigg(\int_{|x_N|>r} u(1/n,x',x_N)\,{\rm d}x_N+ |\varphi_r(1/n,x')| \bigg)\,{\rm d}x'\\
&\leq 2\int_{\rr^{N-1}}\int_{|x_N|>r} u(1/n,x',x_N)\,{\rm d}x_N{\rm d}x' \rightarrow 0 \quad\text{as  $n\rightarrow\infty$.}
\end{align*}
Since $u\in C((0,\infty);L^1(\rr^N))$, we have that $u(t+1/n)$ converges to $u(t)$ in $L^1(\rr^N)$ as $n\rightarrow \infty$. Therefore, $u_n(t)\rightarrow u(t)$ in $L^1(\rr^N)$ as $n\rightarrow \infty$. The contraction property implies  that $u_n\rightarrow u$ in $C([\tau, \infty);L^1(\rr^N))$ as $n\rightarrow \infty$ for all  $\tau>0$.

Defining $\ou _{0,n}^r$  and then  $\ou_n$ in a similar way, we get by the same reasoning that $\ou_n(t)\rightarrow \ou(t)$ in $L^1(\rr^N)$ as $n\rightarrow \infty$.

\smallskip

\noindent (iii) \emph{Approximation of the nonlinearity. } We consider the solution of the problem \eqref{eq.fLgeneral} with initial data $u_{0,n}^r$ and nonlinearity
\begin{equation}
\label{f.eta}
f_\eta(u)=(u^2+\eta)^{q/2}-\eta^{q/2},\quad \eta>0.
\end{equation}
The corresponding solution will depend on  $\eta$. We denote it by $u^\eta_n$. Note that $f_\eta$ is $C^\infty(\rr)$ and satisfies the inequality
\begin{equation}
\label{ineq.f.eta}
 0\leq  f_\eta(s)=(s^2+\eta)^{q/2}-\eta^{q/2}\leq s^q\quad \text{for all $s\geq 0$.}
\end{equation}

  Classical  regularity results in Sobolev spaces  for the heat equation \cite{MR161099} show that $u^{\eta}_n\in L^p_\textup{loc}((0,\infty); H^{2,p}(\rr^N))$   and $\partial_tu\in L^p_\textup{loc}((0,\infty); L^{p}(\rr^N))$  for all $p\in(1,\infty)$. It follows that  $u^{\eta}_n$ belongs to  the parabolic H\"older spaces $C^{\delta,2\delta}(Q)$ for some $\delta>0$, \cite[Section~8.5, p.~117]{MR1406091}. The regularity of $f_\eta$ guarantees that $\partial_{x_N}(f_\eta(u^\eta_n))$ also belongs to $C^{\delta,2\delta}(Q)$ and then regularity results for the inhomogeneous heat equation
\cite[Theorem 8.7.3, p.~123]{MR1406091} show that $u\in C^{1+\delta,2+2\delta}(Q)$, so $u^\eta_n$ is a classical solution.

Besides, \cite[Lemma B.17]{JFL} gives that $u_n^\eta\rightarrow u_n$ in $C([0,\infty);L^1(\rr^N))$ as $\eta\to0^+$.  Hence,  in view of the previous step, $u_n^\eta\rightarrow u$ in $C([\tau,\infty);L^1(\rr^N))$ for all $\tau>0$.  A similar result holds for $\ou_n^\eta$, which is defined analogously.

\smallskip
\noindent (iv) \emph{The primitives are bounded solutions of the Hamilton-Jacobi equation.} It is easily checked that
\[
v_n^\eta (t,x',x_N):=\int_{-\infty}^{x_N} u^\eta_n(t,x',y_N)\,{\rm d}y_N\quad\text{for all }(t,x',x_N)\in (0,\infty)\times\rr^{N-1}\times\rr
\]
is a nonnegative classical solution of the Hamilton-Jacobi equation
\begin{equation}
\label{HJ-nonlocal}
  \partial_tv- \Delta v =f_\eta(\partial_{x_N}v) \quad\text{in }Q.
\end{equation}
Let us prove that it is bounded. Indeed, since $u^\eta_n\in C([0,\infty);L^1(\rr^N))$ is nonnegative,
\begin{equation}
	\label{vcomparedtow}
v_n^\eta(t,x',x_N)\leq \int_{\rr} u^\eta_n(t,x',y_N)\,{\rm d}y_N=:w_\eta^n(t,x'),
\end{equation}
where the function $w_\eta^n$ belongs to $C([0,\infty);L^1(\rr^{N-1}))$ and solves \normalcolor
\begin{equation}
\label{eq:wneps}
\partial_tw-\Delta_{x'} w=0\quad\text{in $(0,\infty)\times\rr^{N-1}$,}\qquad w(0)=M\Gamma_{N-1}(1/n)\quad\text{on $\rr^{N-1}$.}
\end{equation}
The solution of this problem is $w_\eta^n(t)=M\Gamma_{N-1}(t)\ast \Gamma_{N-1}(1/n)$. Hence,
$$
\|w_\eta^n(t)\|_{L^\infty(\rr^{N-1})}\leq M\| \Gamma_{N-1}(1/n)\|_{L^\infty(\rr^{N-1})}\leq Mc_n
$$
for some positive constant $c_n$, and $v^\eta_n$ is  bounded. The same results, with the same bound, hold for $\overline v^{\eta}_n$, which is defined analogously.

\smallskip
\noindent (v) {\emph{Comparison of the traces.}} We claim that, for all~$(x',x_N)\in \rr^{N-1}\times\rr$,
\begin{equation}
\label{eq:comparison.integrals}
\int_{-\infty}^{x_N-2r} u_{0,n}^r(x',y_N)\,{\rm d}y_N\leq \int_{-\infty}^{x_N} \ou_{0,n}^r(x',y_N)\,{\rm d}y_N \leq \int_{-\infty}^{x_N+2r} u_{0,n}^r(x',y_N)\,{\rm d}y_N.
\end{equation}
We prove only the first inequality since the second one can be obtained similarly.
By construction, $u_{0,n}^r(x',\cdot)$ is supported in $(-r,r)$. Thus, if $x_N<r$,  the left-hand-side term vanishes, and,  hence the inequality is true since $\ou_{0,n}^r$ is nonnegative. If $x_N>r$, using  the support of
$\ou_{0,n}^r(x',\cdot) $ and~\eqref{mass.u_0n},
\begin{align*}
\label{}
  \int_{-\infty}^{x_N-2r} u_{0,n}^r(x',y_N)\,{\rm d}y_N&\leq  \int_{-\infty}^{\infty} u_{0,n}^r(x',y_N)\,{\rm d}y_N=
  M\Gamma_{N-1}(1/n,x')=\int_{-\infty}^{\infty} \ou_{0,n}^r(x',y_N)\,{\rm d}y_N\\
  &=\int_{-\infty}^{r} \ou_{0,n}^r(x',y_N)\,{\rm d}y_N =\int_{-\infty}^{x_N} \ou_{0,n}^r(x',y_N)\,{\rm d}y_N,
\end{align*}
and the inequality is also true.

On the other hand, since $u^\eta_n\in C([0,\infty);L^1(\rr^N))$, then $u^\eta_n(t)\rightarrow u_{0,n}^r$  in $L^1(\rr^N)$ as $t\rightarrow 0^+$. Thus,
$$
v_n^\eta (t,x',x_N)\rightarrow \int_{-\infty}^{x_N} u_{0,n}^r(x',y_N)\,{\rm d}y_N\quad\text{in }L^1_{x'} (\rr^{N-1};L_{x_N}^\infty(\rr))\text{ as }t\rightarrow 0^+,
$$
and, \normalcolor  hence, a.e.~in $\rr^{N-1}\times\rr$ as well. Similar arguments hold for $ \overline v^\eta_n$. Therefore,~\eqref{eq:comparison.integrals} translates into
\begin{equation}
\label{claim.v}
  v^\eta_n(0+,x',x_N-2r)\leq \overline v^\eta_n(0+,x',x_N)\leq v^\eta_n(0+,x',x_N+2r) \quad\text{for a.e} \ (x',x_N)\in \rr^{N-1}\times\rr.
\end{equation}

\smallskip
\noindent(vi) \emph{Comparison of the primitives.} Let us now show that the inequalities~\eqref{claim.v} for the traces imply
\begin{equation}
\label{claim.v.2}
   v^\eta_n(t,x',x_N-2r)\leq \overline v^\eta_n(t,x',x_N)\leq v^\eta_n(t,x',x_N+2r)\quad\text{for all }(t,x',x_N)\in (0,\infty)\times \rr^{N-1}\times\rr.
\end{equation}
We only prove the first inequality, since the proof of the second is analogous. To this aim,  we define
\[
g(t,x',x_N):=v^\eta_n(t,x',x_N-2r)- \overline v^\eta_n(t,x',x_N)\quad\text{for all }(t,x',x_N)\in (0,\infty)\times\rr^{N-1}\times\rr.
\]
By step (iv), $g$ is uniformly bounded by $2Mc_n$. Moreover, $g\in C^{1+\delta, 2+2\delta} (Q)$ and satisfies
\begin{eqnarray}\label{eq.g}
\partial_t g-\Delta g= a(t,x) \partial_{x_N}g\quad\text{in }Q,\qquad \lim _{t\to0^+}g(t)\leq 0\quad\text{a.e. on }\rr^{N},
\end{eqnarray}
where $|a(t,x)|\leq c_{\eta,q}$.

Following~\cite[Theorem 3]{MR2422079}, we consider $\psi:\rr^N\rightarrow \rr_+$ smooth such that  $\psi(x)= 0$ for $|x|\leq 1$, $\psi(x)>\|g\|_{L^\infty(Q)}$ for $|x|\geq2$, with $\psi, \nabla\psi$ and $D^2\psi$ bounded in $\rr^N$. With this choice we introduce, for all $\beta>0$, $\psi_\beta(x)=\psi(\beta x)$. It follows that   $\psi_\beta(x)>\|g\|_{L^\infty(Q)}$ for $|x|\geq2/\beta$, and $\nabla \psi_\beta$, $D^2\psi_\beta$, and $\ml\psi_\beta$ all go to zero uniformly on $\rr^N$ when $\beta \rightarrow 0^+$. Let  $\tilde\delta>0$ and $h(t,x):=g(t,x)-\tilde\delta t-\psi_\beta(x)$. Then $h^+(t) \in W^{1,1}(\rr^N)$ is compactly supported in the ball $|x|<2/\beta$ and
  satisfies $\nabla h^+(t)=\nabla h(t)\mathds{1}_{(h(t)>0)}$  for all $t>0$.  Hence, by~\eqref{eq.g},
\begin{align*}
	\frac 12\frac{\rm d}{{\rm d}t}&\int_{\rr^N }(h^+(t))^2=\int_{\rr^N}h^+(t)\partial_th(t)=	\int_{\rr^N}h^+(t)(\partial_tg(t)-\tilde\delta)\\
&=\int_{\rr^N } h^+(t)  (\Delta g(t)+a \partial_{x_N}g(t)-\tilde\delta)\\
	&=\int_{\rr^N } h^+(t)\big(\Delta  h(t)+a \partial_{x_N}h(t)\big)+
	\int_{\rr^N } h^+(t)\big(\Delta  \psi_\beta+a \partial_{x_N} \psi_\beta-\tilde\delta\big).
\end{align*}
Observe that for $\beta$ small enough, $\beta <\beta(\tilde\delta)$, we have
\[
\big|\Delta  \psi_\beta+a \partial_{x_N} \psi_\beta\big|<\tilde\delta \quad\text{in }\rr^N.
\]

Hence, for all $t>0$, Young's inequality gives
\begin{align*}
	\frac 12\frac{\rm d}{{\rm d}t}\int_{\rr^N }(h^+(t))^2&\leq -\int_{\rr^N}|\nabla h^+|^2+	\int_{\rr^N }a h^+(t)\partial_{x_N}h^+(t)\\
	&\leq-\int_{\rr^N}|\nabla h^+|^2+ \frac{c^2_{n,q}}{4}\int_{\rr^N}  (h^+(t))^2 +
	\int_{\rr^N}|\partial_{x_N}h^+(t)|^2\leq  \frac{c^2_{n,q}}{4}  \int_{\rr^N} (h^+(t))^2.
\end{align*}
Then for all $0<s<t$ we have
\[
\int_{\rr^N }(h^+(t))^2\leq \int_{\rr^N }(h^+(s))^2 +  \frac{c^2_{n,q}}{4}\int_s^t  \int_{\rr^N }(h^+)^2.
\]
Observing that
\[
\lim_{s\rightarrow 0^+} \int_{\rr^N }(h^+(s))^2= \lim_{s\rightarrow 0^+} \int_{|x|<2/\beta}\big((g(s)-\tilde\delta s-\psi_\beta\big)^+)^2 =0,
\]
we conclude that $h^+\equiv 0$, by Gr\"onwall's inequality, so that
\[
g(t,x)\leq \tilde\delta t+\psi_\beta(x) \quad \text{for all }t>0,\text{ all }x\in \rr^N,\text{ and all }\beta<\beta(\tilde\delta).
\]
This implies that $g(t,x)\leq \tilde\delta t$ on $\{|x|<1/\beta\}$ and, \normalcolor  hence, on $\rr^N$ after letting $\beta\rightarrow 0^+$. Finally, the result follows by taking the limit $\tilde\delta\rightarrow 0^+$.

\smallskip
\noindent(vii) \emph{Conclusion.} Inequalities~\eqref{claim.v.2} can be written as
\[
\int_{-\infty}^{x_N-2r} u_n^\eta(t,x',y_N)\,{\rm d}y_N\leq \int_{-\infty}^{x_N} \ou_n^\eta(t,x',y_N)\,{\rm d}y_N \leq \int_{-\infty}^{x_N+2r} u_n^\eta(t,x',y_N)\,{\rm d}y_N.
\]
Since  $u_n^\eta\rightarrow u_n$ in $C([\tau,\infty);L^1(\rr^N))$ for all $\tau>0$, we deduce that $u_n^\eta(t,x',\cdot)\rightarrow u_n (t,x',\cdot)$ in $L^1(\rr)$ for a.e.~$x'\in \rr^{N-1}$ and all $t\geq \tau$. Then, letting $\eta\rightarrow 0$ we get, up to a subsequence,
\[
\int_{-\infty}^{x_N-2r} u_n (t,x',y_N)\,{\rm d}y_N\leq \int_{-\infty}^{x_N} \ou_n (t,x',y_N)\,{\rm d}y_N \leq \int_{-\infty}^{x_N+2r} u_n (t,x',y_N)\,{\rm d}y_N\quad\text{for a.e. }x'.
\]
On the other hand, since   $u_n(t)\rightarrow u(t)$ in $ L^1(\rr^N)$,  up to a subsequence, $u_n (t,x',\cdot)\rightarrow u (t,x',\cdot)$ in $L^1(\rr)$ for a.e.~$x'\in \rr^{N-1}$. We conclude that,  for all $t\geq\tau$,
\[
\int_{-\infty}^{x_N-2r} u  (t,x',y_N)\,{\rm d}y_N\leq \int_{-\infty}^{x_N} \ou  (t,x',y_N)\,{\rm d}y_N \leq \int_{-\infty}^{x_N+2r} u  (t,x',y_N)\,{\rm d}y_N\quad\text{for a.e. }x'.
\]
Letting $r\to0^+$, we deduce that $u(t,x',x_N)=\ou (t,x',x_N)$ for a.e.~$(x',x_N)\in \rr^{N-1}\times\rr$ and all $t\geq \tau$, whence the uniqueness result.

\noindent\emph{The reduced equation.} The proof  is similar to the case $\mathcal{L}=-\Delta$. Let us comment on the points where some care has to be taken.
In step (i) we still have that $v$ defined by~\eqref{integrated.u} is the unique fundamental solution with mass $M$ of  $\partial_t v-\Delta_{x'} v=0$. In step (iii) we need not only to regularize the nonlinearity but also to add a diffusion term in the $x_N$ variable
\begin{equation}\label{reg'}
\partial_tu -\Delta_{x'}u-\varepsilon \partial_{x_Nx_N}^2u+\partial_{x_N} (f_\eta(u)) =0\quad\textup{in }Q.
\end{equation}
Its solutions satisfy $u^{\eps,\eta}\rightarrow u$ in $C([\tau,\infty);L^1(\rr^N))$ for all $\tau>0$ as $\eps,\eta\to0$.
The primitives in step (iv) satisfy the Hamilton-Jacobi equation
\[
\partial_t v-\Delta_{x'} v-\varepsilon \partial_{x_Nx_N}^2v =f_\eta(\partial_{x_N}v) \quad \text{in }Q,
\]
instead of~\eqref{HJ-nonlocal}. We can bound these primitives in terms of a function $w_\eps^n$ defined as in~\eqref{vcomparedtow}, which solves the same problem~\eqref{eq:wneps} as the function $w_\eps^n$ that was defined for the other case, and that hence has the same bound. The proof of step (vi) also works nicely, once we observe that $\int_{\rr^N} h^+ (\Delta_{x'}h)\leq 0$ and the term $-\eps |\partial_{x_N} h^+|^2 $ helps to absorb the extra term appearing when applying Young's inequality.
\end{proof}

%%%%%%%%%%%%%%%%%%%%%%%%%%%%%%%%
\section{Source-type solutions have a sign}
\label{sect-SignChanges} \setcounter{equation}{0}

We now prove the uniqueness and existence of source-type solutions having a tail-control.

\begin{proof}[Proof of Theorem \ref{thm:uniqueness.no.sign}] We follow the main steps in \cite{Carpio1996} but adapting them to the concept of entropy solutions.

\smallskip
\noindent(i) \emph{$u^+$ and $u^-$ satisfy \eqref{eq:sub.entropy.inequality}.} Since $u$ is an entropy solution it follows that $u^+$ and $u^-$ satisfy the regularity in Definition \ref{def.sub-entropySolution} away from $t=0$, i.e. $u^{\pm}\in  L^\infty ([\tau,\infty);L^\infty(\rr^N))$ and $\nabla u^{\pm}\in L^2_{\textup{loc}}([\tau,\infty)\times \rr^N)$ for all $\tau>0$. Let us prove that both of them are entropy subsolutions. Since $u$ is an entropy solution, it satisfies inequality \eqref{eq:sub.entropy.inequality}. Clearly, the $0$-function satisfies the same identity.  Using Proposition \ref{results.subsolutions}
we obtain that $u^+=\max\{u,0\}$ is a subsolution. In the case of the negative part we first observe that, since $f(u)$ is an odd nonlinearity,  $-u$ is again an entropy solution of the same problem with different initial data.  Using again the $0$-function, we obtain that $u^-=\max\{-u,0\}$ is an entropy subsolution.

We emphasize that in the case of one dimensional scalar conservation laws the fact that $u^+$ and $u^-$ are entropy subsolutions has been proved in \cite[Lemma~1.1]{MR735207}.

\smallskip
\noindent(ii) \emph{Initial data of $u^+$ and $u^-$.} Since $u(t)\rightarrow M\delta_0$ as $t\rightarrow 0$ it follows that for any sequence $(t_n)_{n\geq 1}\rightarrow 0$, there exists a subsequence $(t_j)_{j\geq 1}\rightarrow 0$ and two  nonnegative finite measures $\mu$ and $\nu$ such that $M\delta_0=\mu -\nu$ and
\[
u^+(t_j)\rightarrow \mu, \ u^{-}(t_j)\rightarrow \nu
\]
in the sense of measures. The tail control  implies that the two measures must be Dirac measures supported at $x=(x',x_N)=0$.

\smallskip
\noindent(iii) \emph{Comparison of $u^+,u^-$ with entropy solutions.} Let $m_j\in C_\textup{c}^\infty(\rr^N)$ be a sequence of nonnegative functions converging to $M\delta_0$. Let us denote by $h_j$ and $g_j$ the entropy solutions of problem \eqref{eq.fLgeneral} with initial data $h_j(0)=u^+(t_j)$ and $g_j(0)=u^{-}(t_j)+m_j$ belonging to $L^1(\rr^N)\cap L^\infty(\rr^N)$, respectively. The uniqueness result in Theorem \ref{lem.UniquenessPropertiesEntropy}(b) guarantees the uniqueness of the two solutions. Observe that $v_j(t):=u^+(t_j+t)$   is an entropy subsolution of \eqref{eq.fLgeneral} with the same initial data as $h_j$. It follows from Proposition \ref{results.subsolutions} that $v_j(t)\leq h_j(t)$. A similar result holds for $u^-.$ Hence
\[
0\leq u^+(t+t_j)\leq h_j(t), \ 0\leq u_j^{-}(t+t_j)\leq g_j(t)\quad \text{for a.e $t>0$.}
\]

\smallskip
\noindent(iv) \emph{Estimates for $h_j$ and $g_j$.}
Since $u\in L^\infty((0,\infty);L^1(\rr^N))$ it follows that
\[
\int_{\rr^{N}} u^\pm(t_j)\leq \int _{\rr^N} |u(t_j,x)|dx\leq \|u\|_{L^\infty((0,\infty);L^1(\rr^N))}=A<\infty
\]
and hence the sequences $(h_j(0))_{j\geq 1}$ and $(g_j(0))_{j\geq 1}$ are uniformly bounded in $L^1(\rr^N)$.

Let us recall some results in \cite{JFL} for nonnegative solutions $w$ of  \eqref{eq.fLgeneral} with initial data $0\leq w_0 \in L^1(\rr^N)\cap L^\infty(\rr^N)$. They conserve the mass, and when $1-\frac 1N<q\leq 2$, we have
hyperbolic estimates (see the proof of Proposition 4.4 in \cite{JFL} with $\alpha=2$ and $\lambda=1$):

\noindent$\bullet$ \textup{(Time decay of $L^p$-norm)} For all $p\in[1,\infty]$ and a.e. $t>0$,
\begin{equation}\label{eq.HEDecayOfLpNorm}
\|w (t)\|_{L^p(\rr^N)}\lesssim t^{-\frac{N+1}{2q}(1-\frac 1p)}\|w_0\|_{L^1(\rr^N)}^{\frac 1q(1-\frac 1p)}.
\end{equation}

\noindent$\bullet$ \textup{(Energy estimate)} For all $0<\tau<T<\infty$,
\begin{equation}\label{eq.HEEnergyEstimate}
\int_\tau^T \|\mathcal{L}^{\frac{1}{2}}w(t,\cdot)\|_{L^2(\rr^N)}^2dt\lesssim \tau ^{-\frac {N+1}{2q}}\|w_0\|_{L^1(\rr^N)}^{1/q}.
\end{equation}

\noindent$\bullet$ \textup{(Estimate on the time derivative)} For all bounded domains $\Omega\subset \rr^N$,
\begin{equation}\label{eq.HEEstimateTimeDerivative}
\| \partial_t w\|_{L^2((\tau,T);H^{-1}(\Omega))}\lesssim C(\tau,\|w_0\|_{L^1(\rr^N)}).
\end{equation}

We now apply the above estimates to $w_0=h_j(0)$. A similar argument will be used for $g_j(0)$.

 Let us first consider the case when $\ml=-\Delta$.
Since $(h_j(0))_{j>0}$ is uniformly bounded in $L^1(\rr^N)$, the above estimates \eqref{eq.HEEnergyEstimate}, \eqref{eq.HEEstimateTimeDerivative}   allow us to use the classical compactness argument of Aubin-Lions-Simon \cite[Theorem 5]{Simon}. Thus,
 up to a subsequence,  $h_j\rightarrow h$ in $L^2_{\textup{loc}}(Q)$,  $h_j\rightarrow h$ a.e. in $Q$, and $h_j\rightarrow h$ in $C_{\textup{loc}}((0,\infty); H^{-\eps}(\Omega))$  for every $\eps>0$ and $\Omega\subset \rr^N$ a bounded domain.

When $\ml=-\Delta_{x'}$, the compactness is a little bit more difficult since
 the energy estimate \eqref{eq.HEEnergyEstimate} is only a partial gradient:
\[
\int_\tau^T \|\nabla_{x'}w(t,\cdot)\|_{L^2(\rr^N)}^2dt\lesssim \tau ^{-\frac {N+1}{2q}}\|w_0\|_{L^1(\rr^N)}^{1/q}.
\]
However, by a Hoff/Ole\u{\i}nik-type estimate, one can deduce a uniform shift estimate in the  direction $x_N$ (see \cite[Lemma 3.8]{JFL}):
\begin{align}\label{translation.N}
\int_{|x'|<R', |x_N|<R}&|w(t,x+(0,\xi_N))-w(t,x)|dx\\
&
\lesssim |\xi_N|R'^{N-1}\Big(R^{-1}t^{-1-\frac{N+1}2\frac{q-2}q}\|w_0\|_{L^1(\rr^N)}^{\frac {2-q}q}+t^{-\frac 1q\frac{N+1}2} \|w_0\|_{L^1(\rr^N)}^{\frac 1q} \Big).  \nonumber
\end{align}
The compactness result in \cite[Lemma C.2]{JFL} with $\alpha=2$ gives the desired convergence of $(h_j)_{j\geq 1}$.

\medskip

The compactness obtained above  allow us to pass to the limit    in the entropy inequality \eqref{eq:entropy.inequality}
\[
\iint_Q \Big( |h_j-k|\partial_t\phi + \sgn(h_j-k)\big(f(h_j)-f(k)\big)\partial_{x_N}\phi-|h_j-k|(\mathcal{L}\phi) \Big)\geq0
\]
to obtain that the limit point $h$ also satisfies \eqref{eq:entropy.inequality}.

Let us now identify the initial data that $h$ takes in the sense of measures. We use the fact that $h_j$ conserves the mass and it is also a very weak solution. After a classical approximation argument we deduce that for any $\psi\in C_\textup{c}^\infty(\rr^N)$ we have
\begin{align*}
\Big|\int_{\rr^N} h_j(t,x)\psi(x)\,{\rm d}x-\int _{\rr^N}h_j(0)\psi(x)\,{\rm d}x \Big|\leq RHS&:=
\int_0^t|h_j| |\mathcal{L} \psi|+ \int_{\rr^N} |h_j|^q |\partial_{x_N}\psi |\\
&\leq tA\|\mathcal{L} \psi \|_{L^\infty(\rr^N)}  +\int_0^t \int_{\rr^N} |h_j|^q |\partial_{x_N}\psi |.
\end{align*}
Since $q \in(1-\frac 1N,1)$, we use the $L^1$-bound in Lemma \ref{lem.UniquenessPropertiesEntropy} (iv) to get
\[
\int_0^t \int_{\rr^N} |h_j|^q |\partial_{x_N}\psi |\leq \int_0^t \|h_j\|_{L^1(\rr^N)}^q \|\partial_{x_N}\psi\|_{L^{1/(1-q)}(\rr^N)} \leq C(A) \|\partial_{x_N}\psi\|_{L^{1/(1-q)}(\rr^N)}  t,
\]
and then the $RHS$  goes to zero as $t\rightarrow0^+$ uniformly in $j$.
Letting $j\rightarrow\infty$ it follows that for all $\psi\in C_\textup{c}^\infty(\rr^N)$
\[
\int_{\rr^N } h(t,x)\psi(x)\,{\rm d}x \rightarrow \langle \mu,\psi\rangle \quad \textup{as $t\rightarrow 0^+$.}
\]
To prove the above convergence for all $\psi\in C_\textup{b}(\rr^N)$, we use the tail control in Lemma \ref{lem.FutherPropEntropy}(c) and the above considerations to get
\begin{equation}\label{eq.HETailControl}
\begin{split}
\int_{|x|>2R} h_j(t,x)\,{\rm d}x&\lesssim  \int_{|x|>R} u^+(t_j)\,{\rm d}x+ \frac{At}{R^2}+ C(A)\frac{t}{R^{1-N(q-1)}}.
\end{split}
\end{equation}
Letting $j\rightarrow \infty$ and using that $u^+(t_j)$ converges to a multiple of Dirac mass we get
\[
\int_{|x|>2R} h(t,x)\,{\rm d}x\lesssim    \frac{At}{R^2}+ C(A)\frac{t}{R^{1-N(q-1)}}
\]
and then again $\int_{\rr^N} h(t)\psi\rightarrow \langle \mu,\psi\rangle$ as $t\rightarrow 0$ for all $\psi\in C_\textup{b}(\rr^N)$.

\smallskip
\noindent(v) \emph{Application of uniqueness for nonnegative solutions.}
It follows that $h$ is a nonnegative entropy solution of equation \eqref{eq.fLgeneral} with initial data a Dirac delta denoted by $\mu$.
A similar argument holds for the sequence  $(g_j)_{j\geq 1}$ and then its limit point $g$ is again an entropy solution of equation \eqref{eq.fLgeneral} but with an initial data  $ \nu + M\delta_0=\mu$. From the uniqueness of the nonnegative solutions with Dirac delta initial data we deduce that $h=g$.

\noindent(vi) \emph{Conclusion.} Using that $h=g$ and letting   $t_j\rightarrow 0^+$ and then $t\rightarrow 0^+$ we get $\nu=0$, $\mu=M\delta_0$, $u^-=0$, $u=u^+$ and the uniqueness result for nonnegative solutions applies again. The full details are given in \cite{Carpio1996}.
\end{proof}

\begin{remark}
\label{rk:q.2.3}
All the  estimates in Step (iv) in the proof of Theorem \ref{thm:uniqueness.no.sign} are based on the $L^1$--$L^p$ hyperbolic estimates \eqref{eq.HEDecayOfLpNorm}. When $\ml=-\Delta$ one can also use the parabolic $L^1$--$L^p$ estimates (see \cite[Lemma 3.5]{JFL} with $\alpha=2$) which are available in the range $q>1-1/N$:
\[
\|w(t)\|_{L^p(\rr^N)}\lesssim t^{-\frac N2(1-\frac 1p)}\|w_0\|_{L^1(\rr^N)}\quad\textup{for all $p\in[1,\infty]$ and   $t>0$}.
\]
This then gives
\[
\int_{|x|>2R} h(t,x)\,{\rm d}x\lesssim    \frac{At}{R^2}+ C(A)\frac{t^{1-\frac N2(q-1)}}R.
\]
Note that the right-hand side goes to zero as $t\rightarrow 0^+$ for all $q\in(1,1+2/N)$. This has already been observed in dimension one for nonnegative solutions in \cite[Section~4]{EVZArma}. We are then able to apply the   arguments in Step (iv) also when $N=1$ and $q\in (2,3)$, a case which was not treated in \cite{Carpio1996}.
\end{remark}

We now proceed to the existence proof.
\begin{proof}[Proof of Theorem \ref{thm:existence}]
Let us consider $m_j=M j^{-N}\varphi(x/j)$, $j\geq 1$, with $\varphi$ a nonnegative smooth function with compact support and mass one. Applying the same arguments as above  to $(g_j)_{j\geq 1}$ we obtain the required existence.
\end{proof}

%%%%%%%%%%%%%%%%%%%%%%%%%%%%%%%%%%%%%%%%%%%%%%%%%%%%%%%%%%%%%%%%%%%%%%%%%%%%%%%%%%
\section{Application to large-time behaviour}
\label{sec:AsymptoticBehaviour}
\setcounter{equation}{0}

In this section, we will treat the large-time behaviour of signed solutions of \eqref{eq.fLgeneral} when $q\in(1-1/
N,1)$.

\begin{proof}[Proof of Theorem \ref{thm:long.time.behaviour}]
\noindent\textup{(i)} \emph{The case $\mathcal{L}=-\Delta$.} We introduce the one-parameter family of functions
$$
u_\lambda(t,x',x_N):=\lambda^\gamma u(\lambda t, \lambda^{1/2}x', \lambda^\beta x_N),
$$
where the exponents $\gamma,\beta>0$ are such that:\\
\noindent$\bullet$ The mass of the solutions remains constant.\\
\noindent$\bullet$ The decay in the $L^\infty$-norm remains independent of $\lambda$.\\
The conservation of mass imposes
\[
\gamma=\frac{N-1}{2} + \beta.
\]

Since $1-\frac{1}{N}<q <1$, we are in the subcritical case, and we rely on hyperbolic estimates. In particular we have \eqref{eq.HEDecayOfLpNorm} with $p=\infty$.
  To keep this estimate invariant for $u_\lambda$, we set
\[
\beta=\frac{N+1-q(N-1)}{2q}.
\]
The large time behaviour for $u$ will follow from the behaviour as $\lambda\to\infty$ of $u_\lambda$ for some fixed time $t$. Note that $u_\lambda$ is an entropy solution of
\begin{equation}\label{eq.u_lambdaSubcritical}
\partial_t u_\lambda-\Delta_{x'}u_\lambda-\lambda^{1-2\beta}\partial_{x_Nx_N}^2u_\lambda+\partial_{x_N}(u_\lambda^q)=0\quad\textup{in $Q$},\qquad u_\lambda(0)=u_{0,\lambda}\quad\textup{in $\rr^N$},
\end{equation}
where $u_{0,\lambda}(x',x_N):=\lambda^\gamma u_0(\lambda^{1/2}x',\lambda^\beta x_N)$ and $1-2\beta<0$. We therefore expect that the solution of \eqref{eq.u_lambdaSubcritical} converges to the source-type solution of $\partial_t u-\Delta_{x'}u+\partial_{x_N}(u^q)=0$ as $\lambda\to\infty$.

To do so, we need several results concerning the sequence $(u_\lambda)_{\lambda>1}$: time decay of $L^p$-norm (cf. \eqref{eq.HEDecayOfLpNorm}), conservation of mass (cf. Lemma \ref{lem.UniquenessPropertiesEntropy}(a)(v)), energy estimate (cf. \eqref{eq.HEEnergyEstimate}), local estimate on the $x_N$-derivative (cf. \eqref{translation.N}), estimate on the time derivative (cf. \eqref{eq.HEEstimateTimeDerivative}), and tail control (cf. \eqref{eq.HETailControl}).  This is exactly Proposition 4.4 in \cite{JFL} with $\alpha=2$.  All the mentioned properties can then be transferred to $u_\lambda$ by scaling.

The proof of Theorem \ref{thm:long.time.behaviour} then follows the steps of the proof of the subcritical case for Theorem 1.3 in \cite{JFL} with $\alpha=2$. Compactness in the form of Lemma C.2 in \cite{JFL} with $\alpha=2$ is mainly a consequences of  the energy estimate for the operator $-\Delta_{x'}$ together with the local estimate on the $x_N$-derivative. This and the tail control  yield that there is a subsequence such that $u_\lambda(t)\to U(t)$ in $L^1(\rr^N)$ for a.e. $t>0$ and $u_\lambda(t,x)\to U(t,x)$ for a.e. $(t,x)\in Q$ as $\lambda\to\infty$. The limit equation has already been identified, but a rigorous discussion on the stability of the entropy formulation also follows as in \cite{JFL} with $\alpha=2$. Finally, the identification of the initial data heavily depends on the deduced tail control.

The uniqueness (cf. Theorem \ref{thm:uniqueness.non-negative}) of the limit equation then guarantees that the whole sequence $(u_\lambda)_{\lambda>0}$ converges to the limit $U$, and we obtain Theorem \ref{thm:long.time.behaviour}.

\smallskip
\noindent\textup{(ii)} \emph{The case $\mathcal{L}=-\Delta_{x'}$.} We will have to start by replacing $\mathcal{L}=-\Delta_{x'}$ by $\mathcal{L}=-\Delta_{x'}-\varepsilon \partial_{x_Nx_N}^2$ in \eqref{eq.fLgeneral}. The same considerations as above imply that $u_\lambda^\varepsilon$ solves
\begin{equation}\label{eq.u_lambdaSubcritical2}
\partial_t u_\lambda^\varepsilon-\Delta_{x'}u_\lambda^\varepsilon-\lambda^{1-2\beta}\varepsilon\partial_{x_Nx_N}^2u_\lambda^\varepsilon+\partial_{x_N}((u_\lambda^\varepsilon)^q)=0\quad\textup{in $Q$},\qquad u_\lambda^\varepsilon(0)=u_{0,\lambda}\quad\textup{in $\rr^N$}.
\end{equation}
Again, $1-2\beta<0$, and we therefore expect the same limit problem as above when $\lambda\to\infty$.

Repeating the previous argument, $u_\lambda^\varepsilon$ also satisfies the mentioned properties above (uniformly in $\varepsilon$ and $\lambda$). Again, Lemma C.2 in \cite{JFL} provides compactness of $(u_\lambda^\varepsilon)_{\lambda>0,\varepsilon>0}$. Up to a subsequence, we have that $u_{\lambda_k}^{\varepsilon_k}(t)\to U(t)$ in $L^1(\rr^N)$ for a.e. $t>0$ and $u_{\lambda_k}^{\varepsilon_k}(t,x)\to U(t,x)$ for a.e. $(t,x)\in Q$ as $k\to\infty$. The proof can then be finished as before.
\end{proof}

%%%%%%%%%%%%%%%%%%%%%%%%%%%%%%%%%%%%%%%%%%%%%%%%%%%%%%%%%%%%%%%%
\subsection*{Acknowledgements}

L. I.  Ignat received funding from Romanian Ministry of Research, Innovation and Digitization, CNCS - UEFISCDI, project number PN-III-P1-1.1-TE-2021-1539, within PNCDI III.

F. Quir\'os supported by grants CEX2019-000904-S, PID2020-116949GB-I00, and RED2018-102650-T, all of them funded by MCIN/AEI/10.13039/501100011033, and by the Madrid Government (Comunidad de Madrid – Spain) under the multiannual Agreement with UAM in the line for the Excellence of the University Research Staff in the context of the V PRICIT (Regional Programme of Research and Technological Innovation).

%%%%%%%%%%%%%%%%%%%%%%%%%%%%%%%%%%%%%%%%%%%%%%%%%%%%%%%%%%%%%%%%%%%%%%%%%%%%%%%%%%
%References

%\bibliographystyle{abbrv}
%\bibliography{biblio}

\end{document}